\documentclass[12pt]{article}

\usepackage{amsmath,amsthm,amsfonts,amssymb,amscd,cite}

\allowdisplaybreaks[4]

\newtheorem {theorem} {Theorem}
\newtheorem {conjecture}{Conjecture}
\newtheorem {corollary}{Corollary}

\newtheorem {claim}{Claim}

\begin{document}

\title{Hamiltonicity of $1$-tough $(P_2\cup kP_1)$-free graphs}

\author{Leyou Xu\footnote{Email: leyouxu@m.scnu.edu.cn}, Chengli Li\footnote{E-mail: lichengli@m.scnu.edu.cn}, Bo Zhou\footnote{Corresponding author.
E-mail: zhoubo@m.scnu.edu.cn}\\
School of Mathematical Sciences, South China Normal University\\
Guangzhou 510631, P.R. China}

\date{}
\maketitle

\begin{abstract}
Given a graph $H$, a graph $G$ is $H$-free if $G$ does not contain $H$ as an induced subgraph.
For a positive real number $t$,  a non-complete graph $G$ is said to be $t$-tough if for every vertex cut $S$ of $G$, the ratio of $|S|$ to the number of components of $G-S$ is at least $t$. A complete graph is said to be $t$-tough for any $t>0$.
Chv\'{a}tal's toughness conjecture, stating  that there exists a constant $t_0$ such that every $t_0$-tough graph
with at least three vertices is Hamiltonian,   is still open in general. Chv\'{a}tal and Erd\"{o}s  \cite{CE} proved  that, for any  integer $k\ge 1$,
every $\max\{2,k\}$-connected $(k+1)P_1$-free graph on at least three vertices is Hamiltonian.
Along the  Chv\'{a}tal-Erd\"{o}s theorem,
Shi and Shan \cite{SS} proved  that,  for any  integer $k\ge 4$, every $4$-tough $2k$-connected $(P_2\cup kP_1)$-free graph with at least three vertices is Hamiltonian, and furthermore, they proposed a conjecture that for any  integer $k\ge 1$, any $1$-tough  $2k$-connected  $(P_2\cup kP_1)$-free graph is Hamiltonian.
In this paper, we  confirm the conjecture, and furthermore, we show that if $k\ge 3$, then the condition `$2k$-connected' may be weakened to be `$2(k-1)$-connected'.
As an immediate consequence, for any integer $k\ge 3$, every $(k-1)$-tough $(P_2\cup kP_1)$-free graph is Hamiltonian. This improves the result of Hatfield and Grimm \cite{HG}, stating that every $3$-tough $(P_2\cup 3P_1)$-free graph is Hamiltonian.
\end{abstract}

{\bf Keywords: } toughness, Hamiltonian graph, $(P_2\cup kP_1)$-free graph

\section{Introduction}

Let $G$ be a graph with vertex set $V(G)$ and edge set $E(G)$.
A graph $G$ is Hamiltonian if there exists a cycle containing each vertex of $G$.
For a given graph $H$, a graph $G$ is called $H$-free if $G$ does not contain $H$ as an induced subgraph.

For vertex disjoint graphs $H$ and $F$, $H\cup F$ denotes the disjoint union of graphs $H$ and $F$.
A linear forest is a graph consisting of disjoint paths.
As usual, $P_n$ denotes the path on $n$ vertices. For positive integer $k$ and $\ell$, $kP_{\ell}$ denotes the linear forest consisting of $k$ disjoint copies of the path $P_{\ell}$.

For a positive integer $k$, a connected graph $G$ is said to be $k$-connected if any deletion of at most $k-1$ vertices on $G$ also results in a connected graph.

For a graph $G$ with $S\subset V(G)$, denote by $G[S]$ the subgraph of $G$ induced by $S$.
Let $G-S=G[V(G)-S]$.
The number of components of $G$ is denoted by $c(G)$.

The toughness of a graph $G$, denoted by $\tau(G)$, is defined as
\[
\tau(G)=\min\left\{\frac{|S|}{c(G-S)}: S\subseteq V(G), c(G-S)\ge 2\right\}
\]
if $G$ is not a complete graph and $\tau(G)=\infty$ otherwise.
For a positive real number $t$,
a graph $G$ is called $t$-tough if $\tau(G)\ge t$, that is, $|S|\ge t\cdot c(G-S)$ for each $S\subseteq V(G)$ with $c(G-S)\ge 2$.
The concept of toughness of a graph was introduced by Chv\'{a}tal \cite{Ch}.
Clearly, every  Hamiltonian graph is $1$-tough, but the converse is not true.
Chv\'{a}tal \cite{Ch} proposed the following conjecture, which is known as Chv\'{a}tal's toughness conjecture.

\begin{conjecture}[Chv\'{a}tal] \cite{Ch} \label{Or}
There exists a constant $t_0$ such that every $t_0$-tough graph
with at least three vertices is Hamiltonian.
\end{conjecture}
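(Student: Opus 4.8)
The plan is to attack the conjecture by the extremal-cycle method, trying to convert the global sparsity encoded by toughness into enough local control to rule out a longest cycle that fails to span. First I would record the elementary consequences of a large toughness bound $t_0$: taking $S$ to be any vertex cut of a $t_0$-tough non-complete graph $G$ on $n\ge 3$ vertices and using $c(G-S)\ge 2$ shows that the connectivity satisfies $\kappa(G)\ge 2t_0$, so the minimum degree satisfies $\delta(G)\ge 2t_0$; and taking $S=N(I)$ for any independent set $I$ with $|I|\ge 2$ gives $|N(I)|\ge t_0|I|$, so independent sets are expensive to separate and the independence number is controlled by the sizes of separating neighbourhoods. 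Since Bauer--Broersma--Veldman-type constructions yield $(9/4-\varepsilon)$-tough non-Hamiltonian graphs, any proof must take $t_0\ge 9/4$, so I would fix $t_0$ to be a suitably large absolute constant and try to show that it suffices.

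Then I would take a longest cycle $C$ in $G$, fix an orientation of $C$, and assume for contradiction that $C$ is not Hamiltonian, so that $R:=V(G)-V(C)\ne\emptyset$. For a vertex $r\in R$ I would consider its attachment set $A=N(r)\cap V(C)$ and the set $A^{+}$ of successors of $A$ along $C$; the standard crossing argument shows, by maximality of $C$, that $A^{+}$ is an independent set and that no vertex of $A^{+}$ is adjacent to $r$, so $A^{+}\cup\{r\}$ is independent. Applying the toughness inequality to $S=N(A^{+}\cup\{r\})$, or more efficiently to cuts assembled from several vertices of $R$ together with rotations of $C$, would produce a large independent set whose neighbourhood separates $G$ into many components, and the aim is to push this until the bound $|N(\cdot)|\ge t_0|\cdot|$ is violated, contradicting $\tau(G)\ge t_0$.

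The hard part, and the reason the conjecture is still open, is exactly this last step: converting the obstruction found at a single longest cycle into a genuine violation of a constant toughness threshold. Because $\tau(G)\ge t_0$ constrains only the \emph{minimum} of $|S|/c(G-S)$ over all cuts, it gives no a priori bound on $|R|$, on the sizes of the attachment sets, or on how the attachment sets of different vertices of $R$ overlap, and so the independent set produced by the rotations need not be large relative to the number of components it creates. In the forbidden-subgraph setting treated elsewhere in this paper, an excluded induced $P_2\cup kP_1$ supplies precisely the missing bound, capping the independent sets that can occur and thereby letting the cycle-extension count close; without such a structural hypothesis I know of no mechanism that forces the rotated independent set to beat a fixed toughness ratio. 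I therefore expect that any unconditional proof will need a fundamentally new way of turning the global minimum-over-cuts condition into local control of a longest cycle, rather than the extremal-cycle bookkeeping outlined here---which is why, at present, the statement remains a major open problem rather than a routine consequence of the toughness hypothesis.
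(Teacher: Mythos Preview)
The paper does not prove this statement: it is presented as Chv\'{a}tal's open conjecture (Conjecture~\ref{Or}), and the authors explicitly write that ``it remains open'' in general. There is therefore no proof in the paper to compare your proposal against, and your proposal is not a proof either---you correctly and candidly conclude that the extremal-cycle approach you outline does not close without an additional structural hypothesis, and that the conjecture ``remains a major open problem.''

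Your diagnosis of \emph{why} the approach stalls is accurate and is precisely the mechanism the paper exploits in its actual results (Theorems~\ref{xu} and~\ref{li}): there the forbidden induced $P_2\cup kP_1$ caps the size of the independent sets $N_C(x)^+$ and their extensions $S'$ produced by the rotation argument, so that one can force $c(G-S)>|S|$ for a specific cut $S=V(C)\setminus S'$ and contradict $1$-toughness. Without such a forbidden-subgraph bound, as you note, the toughness hypothesis alone gives no handle on how the rotated independent set compares to the cut it generates, and the counting does not close. So your write-up is a fair assessment of the state of the problem rather than a flawed proof attempt.
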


Bauer, Broersma and Veldman \cite{BBV} showed that $t_0\ge \frac{9}{4}$ if it exists.
Conjecture \ref{Or}  has been confirmed  for a number of special classes of graphs
\cite{BBS,BHT,BPP,BXY,CJKJ,DKS,GS,HG,KK,LBZ,OS,Shan0,Sh,Shan,SS,Wei}.
For example, it has been confirmed for graphs with forbidden (small) linear forests, such as $1$-tough $R$-free graphs with $R=P_3\cup P_1, P_2\cup 2P_1$ \cite{LBZ},
$2$-tough $2P_2$-free graphs \cite{BPP,Sh,OS}, $3$-tough $(P_2\cup 3P_1)$-free graphs  \cite{HG}, $7$-tough $(P_3\cup 2P_1)$-free graphs \cite{GS} and $15$-tough $(P_3\cup P_2)$-free graphs \cite{Shan}. Though great efforts have been made,  it remains open. 

If connectivity is considered, there is a classic result, due to Chv\'{a}tal and Erd\"{o}s \cite{CE}.

\begin{theorem}[Chv\'{a}tal and Erd\"{o}s] \cite{CE} \label{coe}
For any integer $k\ge 1$, every $\max\{2,k\}$-connected $(k+1)P_1$-free graphs on at least three vertices is Hamiltonian.
\end{theorem}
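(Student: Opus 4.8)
The statement is exactly the classical Chv\'atal--Erd\H{o}s theorem once one observes that a graph is $(k+1)P_1$-free precisely when its independence number satisfies $\alpha(G)\le k$. Writing $\kappa=\kappa(G)$ for the connectivity, the hypotheses give $\kappa\ge\max\{2,k\}\ge k\ge\alpha(G)$, so the plan is to prove the sharper assertion that any graph $G$ on at least three vertices with $\kappa(G)\ge\max\{2,\alpha(G)\}$ is Hamiltonian, using the standard longest-cycle rotation argument. If $\alpha(G)=1$ then $G$ is complete and the conclusion is immediate, so I may assume $\alpha(G)\ge 2$ and hence $\kappa\ge 2$; since $G$ is $2$-connected on at least three vertices it contains a cycle of length at least three, and I fix a longest cycle $C$ and orient it cyclically.

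Arguing by contradiction, suppose $C$ is not Hamiltonian and let $H$ be a component of $G-V(C)$. First I would collect the attachment set $X=N(H)\cap V(C)$ and show $|X|\ge\kappa$: since $H$ is a full component of $G-V(C)$, the set $X$ separates $V(H)$ from $V(C)\setminus X$, and I must check that $V(C)\setminus X\ne\emptyset$ so that $X$ is a genuine vertex cut. This follows from the next observation, which is the crux of the whole argument.

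The key step is a rerouting (``crossover'') lemma. For $x\in X$ let $x^+$ denote its successor along the oriented cycle $C$. I would prove two facts simultaneously. First, no two vertices of $X$ are consecutive on $C$: if $x,x^+\in X$ then, using the connectedness of $H$ and the two attachments, one can splice a path through $H$ in place of the edge $xx^+$ and obtain a cycle longer than $C$, a contradiction; this also guarantees that the successors $\{x^+:x\in X\}$ are distinct, lie outside $X$, and leave $V(C)\setminus X$ nonempty. Second, the successors are pairwise non-adjacent: if $x^+y^+\in E(G)$ for distinct $x,y\in X$, I would build a longer cycle by traversing a path through $H$ from $x$ to $y$, running backward along $C$ from $y$ to $x^+$, crossing the chord $x^+y^+$, and running forward along $C$ from $y^+$ back to $x$; this cycle uses every vertex of $C$ together with at least one vertex of $H$, again contradicting maximality of $C$. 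Finally, no $x^+$ is adjacent to any vertex of $H$, for otherwise $x^+\in X$.

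Putting these together, for any fixed $h\in V(H)$ the set $\{x^+:x\in X\}\cup\{h\}$ is independent, so $\alpha(G)\ge|X|+1\ge\kappa+1>\alpha(G)$, the desired contradiction; hence the longest cycle is Hamiltonian. I expect the main obstacle to be the bookkeeping in the crossover step: one must orient $C$ consistently, verify that the spliced arcs are internally disjoint and exhaust $V(C)$, and confirm that at least one vertex of $H$ is actually inserted, so that the resulting closed walk is a strictly longer cycle rather than merely a different one.
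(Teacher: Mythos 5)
Your proposal is correct, but note that the paper does not prove this statement at all: Theorem \ref{coe} is quoted from Chv\'atal and Erd\H{o}s \cite{CE} as a known classical result, so there is no in-paper proof to compare against line by line. What you give is a sound, self-contained proof of the special case $\kappa(G)\ge\alpha(G)$ via the longest-cycle method, and all the delicate points check out: the translation ``$(k+1)P_1$-free $\Leftrightarrow \alpha(G)\le k$'' is right; the set $X=N_C(H)$ is a genuine vertex cut because your first crossover fact ($X\cap X^+=\emptyset$, where $X^+$ is the successor set) forces $V(C)\setminus X\ne\emptyset$, so $|X|\ge\kappa$; the second crossover fact (the cycle $x\,$--path through $H$--$\,y\,\overleftarrow{C}\,x^+y^+\overrightarrow{C}\,x$ exhausts $V(C)$ and inserts at least one vertex of $H$) makes $X^+$ independent; and no vertex of $X^+$ meets $H$, so $X^+\cup\{h\}$ is independent of size at least $\kappa+1>\alpha(G)$, a contradiction. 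It is worth observing that your crossover lemma is precisely Claim \ref{Claim1} (and Claim \ref{c1}) in the paper's proofs of Theorems \ref{xu} and \ref{li}, and your independent-set count $X^+\cup\{h\}$ is the same device the authors use there to manufacture induced $P_2\cup kP_1$'s; so while the paper only cites Theorem \ref{coe}, your argument is exactly the toolkit the paper builds on, which is a reasonable reconstruction. One stylistic difference from the textbook proof of Chv\'atal--Erd\H{o}s: the classical argument uses a Menger fan from a single vertex off $C$ rather than the full attachment set of a component $H$; your component-based variant is slightly simpler here because the hypothesis $\kappa\ge\alpha$ (rather than $\kappa\ge\alpha$ with equality cases) gives enough room, and it aligns better with how the paper itself argues.
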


Note that $k$-connected $(k+1)P_1$-free graphs must be $1$-tough and that constant connectivity condition cannot guarantee the existence of a Hamiltonian cycle in $(P_2\cup kP_1)$-free graphs.
Supporting Chv\'{a}tal's toughness conjecture, Shi and Shan \cite{SS} and Hatfield and Grimm \cite{HG} established the following interesting results.  

\begin{theorem}[Shi and Shan]  \cite{SS}
For any integer $k\ge 4$,
every $4$-tough  $2k$-connected  $(P_2\cup kP_1)$-free graph is Hamiltonian.
\end{theorem}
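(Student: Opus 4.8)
The plan is to argue by contradiction using a longest cycle, and to produce a forbidden induced $P_2\cup kP_1$ (or, failing that, a vertex cut violating $4$-toughness). Assume $G$ is not Hamiltonian and let $C$ be a longest cycle of $G$, oriented cyclically; write $x^+$ for the successor of $x$ on $C$. Since $G$ is not Hamiltonian, $R:=V(G)\setminus V(C)\neq\emptyset$. I would repeatedly use two standard longest-cycle facts: if $D$ is a component of $G-V(C)$ and $A:=N_C(D)$ is its set of attachment vertices, then (i) no successor $a^+$ with $a\in A$ has a neighbour in $D$ (otherwise rerouting through the connected graph $D$ yields a longer cycle), and (ii) the successors $\{a^+:a\in A\}$ form an independent set (the usual crossing-chord argument). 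Moreover $A$ is a vertex cut separating $D$ from $V(C)\setminus A\neq\emptyset$, so $|A|\ge\kappa(G)\ge 2k$.

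First I would dispose of the case where some component $D$ of $G-V(C)$ satisfies $|D|\ge 2$. Then $D$, being connected, contains an edge $uv$, while by (i)--(ii) the set $\{a^+:a\in A\}$ is independent of size $|A|\ge 2k$ and entirely non-adjacent to $D$. Hence $G[\{u,v\}\cup\{a^+:a\in A\}]$ is an induced $P_2\cup |A|P_1$, which contains an induced $P_2\cup kP_1$ since $|A|\ge 2k\ge k$, contradicting $(P_2\cup kP_1)$-freeness. Therefore every component of $G-V(C)$ is a single vertex, i.e.\ $R$ is an independent set, and every $h\in R$ has $N(h)\subseteq V(C)$ with $|N(h)|\ge 2k$.

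The remaining, and main, case is $R$ independent. Fix $h\in R$, put $A=N(h)$ and let $A^+=\{a^+:a\in A\}$; then $A^+$ is independent, non-adjacent to $h$, and disjoint from $A$ (a successor of an attachment vertex is never an attachment vertex). For each $a\in A$ the edge $ha$ cannot be completed to an induced $P_2\cup kP_1$, and since $h$ is non-adjacent to all of $A^+$, this forces $a$ to be adjacent to all but at most $k-1$ vertices of $A^+$; using the other (pairwise non-adjacent) external vertices as the $kP_1$ shows likewise that $a$ is adjacent to all but at most $k-1$ vertices of $R\setminus\{h\}$. Thus $(P_2\cup kP_1)$-freeness upgrades the mere existence of $\ge 2k$ attachment vertices into a strong domination statement: every attachment vertex is adjacent to almost all successors and to almost all of $R$.

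Finally I would convert this domination into a contradiction, splitting on the size of $R$. When $R$ is large, or the neighbourhoods $N(h)$ overlap heavily, the vertices of $R$ already form many components and removing $N(R)$ isolates them; balancing the strong domination against the toughness inequality $|N(R)|\ge 4\,c(G-N(R))$ should contradict $4$-toughness. When $R$ is small, most critically when $R=\{h\}$ so that $C$ spans all but one vertex, no cheap cut is available, and here the abundant chords $a_i\sim a_j^+$ must instead be fed into Pósa-type rotations: a single crossing chord does not lengthen $C$, but iterating rotations along $C$ should create two consecutive attachment vertices of $h$, allowing $h$ to be inserted and contradicting the maximality of $C$. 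I expect this last step to be the principal obstacle: turning the static domination data into an actual cycle extension requires carefully controlled rotations, and it is precisely here that the connectivity $2k$ (guaranteeing enough attachment vertices to rotate with) and the toughness $4$ (preventing the rotations from stalling against a small separator) are used in an essential way.
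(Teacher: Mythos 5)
Your setup is sound and matches the paper's opening moves: the two longest-cycle facts (your (i)--(ii)) are exactly the paper's Claim 1, your disposal of nontrivial components via an induced $P_2\cup kP_1$ on $\{u,v\}\cup N_C(D)^+$ is the paper's Claim 2, and your ``domination'' observation (each attachment vertex misses at most $k-1$ vertices of $A^+$) is essentially the base case of the paper's key induction. But from that point on there is a genuine gap, and you have flagged it yourself: your endgame --- a case split on $|R|$ with a toughness cut when $R$ is large and P\'osa rotations when $R$ is small --- is not carried out, and the rotation half is precisely where the argument would stall. The missing idea is the paper's \emph{alternation/parity argument}: fixing $x\in R$ with attachment set $N_C(x)=\{x_1,\dots,x_t\}$, one proves by induction on the position $j$ within each segment $S_i=x_i^+\overrightarrow{C}x_{i+1}^-$ that $N_C(x_i^{+j})$ is \emph{disjoint} from $N_C(x)^+$ when $j$ is odd and contains all but a bounded number of $N_C(x)^+$ when $j$ is even (each step uses either a forbidden $P_2\cup kP_1$ or a crossing-chord cycle longer than $C$), and then that every segment has \emph{odd} length. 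Consequently the odd-position vertices $S'$ satisfy $|V(C)|=2|S'|$, form an independent set, and have no neighbours in $R$; so $S=V(C)\setminus S'$ is a cut with $c(G-S)=|S'|+|R|>|S|$, contradicting even $1$-toughness. Your step 3 obtains only the statement at the attachment vertices themselves and never propagates it along the cycle, which is the entire engine of the proof.

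Note also that your worry that ``no cheap cut is available'' when $R$ is small (e.g.\ $R=\{h\}$) is mistaken: the cut $S=V(C)\setminus S'$ above works uniformly, since $c(G-S)=|S|+|R|>|S|$ already when $|R|=1$. This is why the paper needs no rotation machinery and in fact proves the much stronger Theorem 5 (toughness $1$ suffices for all $k\ge 1$), of which the Shi--Shan statement is a special case; by contrast your sketch leans on the full strength of $4$-toughness in a step that is never made precise. To repair your proposal you would need to replace the final paragraph wholesale with the segment-parity induction and the $S'$-based cut.
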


\begin{theorem}[Hatfield and Grimm] \cite{HG} \label{o}
Every $3$-tough $(P_2\cup 3P_1)$-free graph is Hamiltonian.
\end{theorem}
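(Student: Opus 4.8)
The plan is to argue by contradiction through the analysis of a longest cycle, using $3$-toughness first to supply high connectivity and later to bound the number of components created by a carefully chosen cut. First I would record the well-known inequality $\kappa(G)\ge 2\tau(G)$, so a $3$-tough graph $G$ is $6$-connected and, being non-complete (complete graphs are Hamiltonian), has at least seven vertices. Suppose $G$ is not Hamiltonian and let $C$ be a longest cycle with a fixed orientation; write $x^{+}$ for the successor of $x$ along $C$ and $N_{C}(\cdot)$ for neighbours on $C$. Since $G$ is not Hamiltonian, $R:=V(G)\setminus V(C)\neq\emptyset$. Fix a component $H$ of $G[R]$ and let $A:=N_{C}(V(H))$ be its attachment set. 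Because $A$ separates $H$ from $V(C)\setminus A$, $6$-connectivity forces $|A|\ge 6$, and the maximality of $C$ yields the standard successor lemma: the set $A^{+}:=\{a^{+}:a\in A\}$ is independent and no vertex of $A^{+}$ has a neighbour in $V(H)$, since otherwise one reroutes $C$ through $H$ to obtain a longer cycle.

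The first, easy case is when some component $H$ has at least two vertices. Then $H$ contains an edge $h_{1}h_{2}$, and choosing any three successors $a^{+},b^{+},c^{+}\in A^{+}$ (possible as $|A^{+}|\ge 6$), the set $\{h_{1},h_{2},a^{+},b^{+},c^{+}\}$ induces exactly $P_{2}\cup 3P_{1}$: the $h_{i}$ form the $P_{2}$, while the three successors are pairwise non-adjacent and non-adjacent to $h_{1},h_{2}$ by the successor lemma. This contradicts $(P_{2}\cup 3P_{1})$-freeness, so every component of $G[R]$ is a single vertex, that is, $R$ is an independent set.

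This reduces matters to the genuinely hard case in which $R$ is independent. Here each $u\in R$ satisfies $\deg(u)=|N_{C}(u)|\ge 6$, the successors $N_{C}(u)^{+}$ are independent and non-adjacent to $u$, and non-insertability of $u$ forces every gap between consecutive attachments to be non-empty. The engine is now the forbidden subgraph read as a domination statement: for every edge $xy$ that is vertex-disjoint from an independent triple, the endpoints $x,y$ must together dominate all but at most two members of that triple. Applying this to the edges $ua$ with $a\in N_{C}(u)$, together with the independent set $\{u\}\cup N_{C}(u)^{+}$, forces each attachment $a$ to be adjacent to all but at most two successors, producing a very dense bipartite pattern between attachments and successors. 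The intention is to feed this density, together with $3$-toughness applied to a cut of the form $V(G)\setminus\bigl(R\cup\bigcup_{u}N_{C}(u)^{+}\bigr)$ whose deletion leaves many isolated vertices, into a numerical contradiction with the lower bounds $|A|\ge 6$ and with the forced gap sizes.

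The main obstacle is exactly this independent-$R$ case, and I expect it to consume the bulk of the work and to use the full strength of $3$-toughness rather than mere $6$-connectivity. Two difficulties must be resolved together. First, a successor of one singleton component may a priori be adjacent to another component, so the global "successor" set need not be independent; closing this requires either a crossing argument showing that successors of distinct components are mutually non-adjacent, or a reduction to a single component with the largest attachment set. Second, the local extendability and domination conditions must be assembled into one genuine vertex cut $S$ for which $c(G-S)>\tfrac{1}{3}|S|$, contradicting $\tau(G)\ge 3$. The delicate point is that the cut must simultaneously isolate enough vertices of $R$ and enough successors while keeping $|S|$ small, and it is precisely the interaction between several singleton components and their overlapping successor sets that makes the bookkeeping heavy.
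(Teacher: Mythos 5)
Your preliminary steps are sound and match the standard opening (and the paper's Claims \ref{Claim1}--\ref{Claim2}): longest cycle $C$, independence of the successor set $A^{+}$, hence every component of $G-V(C)$ is a single vertex. But from that point on your text is a plan, not a proof, and the plan as stated has a genuine gap: in the independent-$R$ case you never produce a vertex cut $S$ for which $c(G-S)$ is provably large. The cut you name, $V(G)\setminus\bigl(R\cup\bigcup_{u}N_{C}(u)^{+}\bigr)$, is not analysed --- nothing controls how the successor sets of distinct exterior vertices overlap or whether their union is independent, and you explicitly flag both difficulties as unresolved. The missing idea is the parity/alternation argument that is the engine of this paper's proof (Claim \ref{c3}, or \eqref{Xu} in the proof of Theorem \ref{xu}): fix \emph{one} exterior vertex $x$ with attachments $x_1,\dots,x_t$ and show by induction along each gap $S_i=x_i^{+}\overrightarrow{C}x_{i+1}^{-}$ that vertices at odd distance from $x_i$ have no neighbour in $N_C(x)^{+}$, while vertices at even distance are adjacent to all but a bounded number of $N_C(x)^{+}$. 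Your "domination" observation (each attachment adjacent to all but at most two successors) is essentially only the base case of this induction; the inductive step is where the work lies, since the two density estimates force two common neighbours $x_p^{+},x_q^{+}\in N_C(x)^{+}$ of consecutive gap vertices, and a crossing-chord rerouting through $x$ then lengthens $C$. The payoff is structural: every gap has odd length, so the odd-position vertices $S'$ form an independent set with $|V(C)|=2|S'|$; a further application of the same forbidden-subgraph trick (Claim \ref{Claim4}) shows no vertex off $C$ has a neighbour in $S'$, killing in one stroke your worry about interactions between several singleton components. Then $S=V(C)\setminus S'$ satisfies $c(G-S)\ge |S'|+|V(G)\setminus V(C)|>|S|$, contradicting $1$-toughness --- you never need the inequality $c(G-S)>\tfrac{1}{3}|S|$.

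A second, related miscalibration: you predict the argument "uses the full strength of $3$-toughness rather than mere $6$-connectivity," but the paper shows the opposite. It derives Theorem \ref{o} as a corollary of Theorem \ref{li} with $k=3$ (via Corollary \ref{zhong}): a $3$-tough graph is $2$-tough, a non-complete $2$-tough graph is $4$-connected, and every $1$-tough $4$-connected $(P_2\cup 3P_1)$-free graph is Hamiltonian. Toughness above $1$ enters nowhere in the structural analysis; connectivity ($2k-2=4$, weaker than your $6$) drives all the density claims, and toughness is invoked exactly once, at the end, against the half-cycle cut $S$. So the correct repair of your sketch is not heavier bookkeeping over all of $R$, but the single-vertex parity induction above.
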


Shi and Shan \cite{SS} proposed the following conjecture.

\begin{conjecture}[Shi and Shan] \cite{SS} \label{PF}
Let $k\ge 4$ be an integer.
Let $G$ be a $1$-tough  $2k$-connected $(P_2\cup kP_1)$-free graph.
Then $G$ is Hamiltonian.
\end{conjecture}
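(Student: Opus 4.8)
The plan is to argue by contradiction from a longest cycle. Suppose $G$ is $1$-tough, $2k$-connected and $(P_2\cup kP_1)$-free but not Hamiltonian, and fix a longest cycle $C$ together with a cyclic orientation; for $u\in V(C)$ let $u^+,u^-$ denote the successor and predecessor of $u$ on $C$. Since $C$ is not Hamiltonian, $D:=G-V(C)\neq\emptyset$; pick a component $H$ of $D$ and set $X:=N_C(H)$ and $X^+:=\{u^+:u\in X\}$. First I would prove the standard successor lemma by the usual rerouting arguments: $X^+\cap X=\emptyset$, the set $X^+$ is independent, and no vertex of $X^+$ has a neighbour in $H$ — since an edge inside $X^+$, a chord from $X^+$ into $H$, or two consecutive attachments would each let one splice a path through the connected graph $H$ into $C$ and produce a longer cycle. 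Because $X^+\neq\emptyset$ lies outside $X\cup V(H)$, the set $X$ is a genuine vertex cut, so $2k$-connectedness gives $|X|\ge 2k$. Fixing any $w\in H$, the set $I:=X^+\cup\{w\}$ is then independent with $|I|\ge 2k+1$, and no vertex of $X^+$ is adjacent to $w$.

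The first reduction removes all large components. If $|V(H)|\ge 2$, then $H$ contains an edge $ww'$, and since $w,w'$ are both non-adjacent to the independent set $X^+$ with $|X^+|\ge 2k\ge k$, the edge $ww'$ together with any $k$ vertices of $X^+$ induces a copy of $P_2\cup kP_1$, a contradiction. Hence every component of $D$ is a single vertex, so $D$ is an independent set, each of whose vertices has all its neighbours on $C$ and degree at least $2k$. This is the configuration in which all the difficulty is concentrated.

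For the crux I would exploit $(P_2\cup kP_1)$-freeness to force dense, highly structured adjacencies around $C$. Applying the forbidden-subgraph condition to a cycle edge $uu^+$ with $u\in X$ shows that at most $k-1$ vertices of $X^+\setminus\{u^+\}$ can be non-adjacent to $u$, so each attachment is adjacent to at least $|X|-k\ge k$ successors; symmetrically, using $X^-$ gives the analogous statement for predecessors. If moreover $|D|\ge k$, choosing $k$ vertices $w_1,\dots,w_k$ of $D$ yields an independent set, and the absence of any cycle edge with both endpoints outside $\bigcup_i N(w_i)$ forces $\bigcup_i N(w_i)$ to be a vertex cover of $C$. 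The aim is to play these density statements against $1$-toughness: natural cuts such as $N(I)$ satisfy $|N(I)|\ge|I|\ge 2k+1$, and one wants to combine the lower bounds coming from toughness with the upper bounds forced by the forbidden subgraph, while tracking how the arcs of $C$ between consecutive attachments are glued together, in order to reach a numerical contradiction.

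The hard part will be this final isolated-vertex analysis. The obvious chords produced above, namely edges $u_iu_j^+$ between an attachment and a foreign successor, do \emph{not} on their own permit one to splice $w$ into $C$ for a longer cycle — the segment between the two detours cannot be covered — so, unlike the large-component case, the contradiction cannot come from rerouting alone, and must instead be squeezed out of a careful count balancing $(P_2\cup kP_1)$-freeness against $1$-toughness and the exact connectivity. This is precisely where the connectivity threshold is used sharply, and where I expect the room to replace $2k$ by $2(k-1)$ for $k\ge 3$ (and hence to deduce that every $(k-1)$-tough $(P_2\cup kP_1)$-free graph is Hamiltonian, via $\kappa(G)\ge 2\tau(G)$) to come from, at the cost of a more delicate case analysis of how few isolated vertices $D$ can contain and how their neighbourhoods overlap on $C$.
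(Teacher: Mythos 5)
Your setup (longest cycle, the successor lemma, trivial components of $G-V(C)$, degree at least $2k$ for the isolated vertices off $C$) matches the paper's Claims 1 and 2, and your observation that rerouting alone cannot finish the isolated-vertex case and that the contradiction must come from toughness is sound. But the proposal stops exactly where the proof has to start: you never identify the mechanism that produces a toughness-violating cut, and the candidate cuts you name cannot work. Indeed $1$-toughness only forbids a cut $S$ with $c(G-S)>|S|$, whereas your proposed set $N(I)$ satisfies $|N(I)|\ge |I|$ by construction, so no contradiction can be extracted from it; likewise the vertex-cover observation (besides being conditional on $|D|\ge k$, which need not hold) gives an upper-bound-type statement with no route to a violated cut. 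Your density lemma (at most $k-1$ vertices of $X^+$ non-adjacent to an attachment $u$, via the edge $uu^+$) is correct and does appear in the paper's arguments, but it is a supporting estimate, not the engine.

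The missing idea is a parity induction along each arc of $C$ between consecutive neighbors $x_i,x_{i+1}$ of the isolated vertex $x$. Writing $S_i$ for the interior of the arc $x_i^+\overrightarrow{C}x_{i+1}^-$, the paper proves (its Claim 3) by induction on the position $j$ that $N_C(x_i^{+j})\cap N_C(x)^+=\emptyset$ when $j$ is odd, while for even $j$ the vertex $x_i^{+j}$ is adjacent to at least $k+2$ vertices of any $2k$-subset $X\subseteq N_C(x)^+$; the inductive step for odd $j$ uses inclusion--exclusion on two such dense neighborhoods to find $x_p^+,x_q^+$ adjacent to both $x_i^{+(j-1)}$ and $x_i^{+j}$ and then reroutes through $x$ to lengthen $C$. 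The same alternation forces every $|S_i|$ to be \emph{odd}, so the odd-position vertices $S'$ satisfy $|S'|=\tfrac12|V(C)|$, form an independent set, and (the paper's Claim 4) have no neighbors in $V(G)\setminus V(C)$ either. Taking $S=V(C)\setminus S'$ then gives $c(G-S)=|V(G)|-|V(C)|+|S'|>\tfrac12|V(C)|=|S|$, contradicting $\tau(G)\ge 1$. Without this alternating structure --- or some substitute construction of an explicit cut $S$ with more than $|S|$ components --- your ``numerical contradiction'' has no concrete form, so as written the proposal is an unfinished plan rather than a proof. (Your closing intuition that the argument has slack to lower $2k$ to $2k-2$ for $k\ge 3$ is, for what it is worth, exactly what the paper's second theorem establishes, but again via the parity machinery, handling even-length arcs by an extra case analysis.)
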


In this paper, we show that Conjecture \ref{PF} is true by showing the following result.  

\begin{theorem}\label{xu}
For any  integer $k\ge 1$,
every  $1$-tough  $2k$-connected $(P_2\cup kP_1)$-free graph is Hamiltonian.
\end{theorem}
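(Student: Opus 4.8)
The plan is to argue by contradiction from a longest cycle. Orient a longest cycle $C$ of $G$ (which exists since $2k\ge 2$ means $G$ is $2$-connected), and write $x^+$ and $x^-$ for the successor and predecessor of $x\in V(C)$. Suppose $G$ is not Hamiltonian, so $R:=V(G)\setminus V(C)\neq\emptyset$; let $H$ be a component of $G-V(C)$ and put $X:=N_C(H)$. First I would record two standard consequences of the maximality of $C$. Since no two vertices of $X$ can be consecutive on $C$ (otherwise rerouting the cycle-edge between them through a connecting path inside the connected graph $H$ lengthens $C$), the set $V(C)\setminus X$ is nonempty; as the neighbours of $H$ outside $H$ all lie in $X$, the set $X$ is a vertex cut separating $V(H)$ from $V(C)\setminus X$, whence $|X|\ge 2k$ by $2k$-connectivity. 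Writing $X^+=\{x^+:x\in X\}$, maximality of $C$ forces $X^+$ to be independent: an edge $x_i^+x_j^+$ would allow one to reroute $C$ through $H$ (deleting the two cycle-edges leaving $x_i,x_j$ and inserting $x_i^+x_j^+$ together with an $H$-path joining $x_i$ to $x_j$) into a strictly longer cycle. Moreover $X^+\cap X=\emptyset$ together with $X=N_C(H)$ shows that no vertex of $X^+$ is adjacent to $H$.

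The next step is a clean dichotomy on the sizes of the components of $G-V(C)$. Suppose some component $H$ has $|V(H)|\ge 2$; then $H$, being connected, contains an edge $uv$. Choosing any $k$ vertices $y_1,\dots,y_k$ of $X^+$ (possible since $|X^+|\ge 2k$), the set $\{u,v,y_1,\dots,y_k\}$ induces exactly $P_2\cup kP_1$: the pair $uv$ is an edge, $\{y_1,\dots,y_k\}$ is independent because $X^+$ is, and no $y_i$ is adjacent to $u$ or $v$ because $X^+$ is nonadjacent to $H$. This contradicts $(P_2\cup kP_1)$-freeness. Hence every component of $G-V(C)$ is a single vertex; equivalently $R$ is an independent set, and for each $h\in R$ we have $X_h:=N(h)\subseteq V(C)$ with $|X_h|\ge 2k$, with $X_h^+$ independent, and with $X_h^+\cap N(h)=\emptyset$.

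It remains to treat the case in which every vertex of $R$ is an isolated vertex of $G-V(C)$, and this is where the real work lies. Fixing $h\in R$, the set $I:=\{h\}\cup X_h^+$ is independent of size at least $2k+1$, so $(P_2\cup kP_1)$-freeness yields a strong domination property: every edge $uv$ with $u,v\notin I$ must be adjacent to all but at most $k-1$ vertices of $I$, for otherwise $uv$ together with $k$ vertices of $I$ nonadjacent to both $u$ and $v$ would induce $P_2\cup kP_1$. The aim is to exhibit one edge violating this, equivalently to find an edge whose two endpoints are simultaneously nonadjacent to $k$ members of $I$. The natural candidates are the edge $hx$ for $x\in X_h$ (for which one needs $k$ vertices of $X_h^+$ nonadjacent to $x$) and the cycle-edges immediately following attachment points; analysing them reduces the question to controlling the bipartite adjacency between $X_h$ and $X_h^+$.

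Here I would bring in $1$-toughness together with further rerouting. The intended mechanism is a trade-off: if the adjacencies between $X_h$ and $X_h^+$ are too dense, one should be able to splice $h$ into $C$ along an attachment and its successor to produce a longer cycle; whereas if they are too sparse, the vertices of $X_h^+$ (and of the independent set $R$) can be assembled into a vertex cut that creates more components than the number of deleted vertices, contradicting $\tau(G)\ge 1$. I expect this final step, which must fuse the forbidden-subgraph domination bound, the longest-cycle rerouting, and the toughness count into a single contradiction, to be the main obstacle; it is presumably also the place where the precise threshold $2k$ enters, and where the sharper analysis delivering the improvement to $2(k-1)$ for $k\ge 3$ is carried out.
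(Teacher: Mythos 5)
Your setup is correct and matches the paper's Claims 1 and 2 exactly: the longest cycle $C$, the independence of $N_C(H)^+$, the bound $|N_C(H)|\ge 2k$ from $2k$-connectivity, and the triviality of all components of $G-V(C)$ via an induced $P_2\cup kP_1$ built from an edge of a nontrivial component plus $k$ vertices of $N_C(H)^+$. Your ``domination'' observation --- that every edge outside the independent set $I=\{h\}\cup X_h^+$ must be adjacent to all but at most $k-1$ vertices of $I$ --- is indeed the counting tool the paper uses throughout. But the proof stops there: the final paragraph is a plan, not an argument, and you say so yourself. The ``dense versus sparse trade-off'' you sketch does not by itself produce either a longer cycle or a toughness-violating cut, and nothing in the proposal identifies which cut to take.

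The missing idea is a parity induction along the segments of $C$ between consecutive neighbours of the isolated vertex $x\in V(G)\setminus V(C)$. Writing $N_C(x)=\{x_1,\dots,x_t\}$ and letting $S_i$ be the interior of the arc from $x_i^+$ to $x_{i+1}^-$, the paper proves by induction on $j$ that for any $2k$-subset $X\subseteq N_C(x)^+$ with $x_i^+\in X$, the vertex $x_i^{+j}$ satisfies $N_C(x_i^{+j})\cap X=\emptyset$ when $j$ is odd and $\left|N_C(x_i^{+j})\cap X\right|\ge k+2$ when $j$ is even; the induction step for odd $j$ uses your domination bound to force $\left|N_C(x_i^{+j})\cap X\right|\ge k+2$, whence by inclusion-exclusion $x_i^{+(j-1)}$ and $x_i^{+j}$ have \emph{two} common neighbours $x_p^+,x_q^+$ in $X$, and a crossing-chord rerouting through $x$ then lengthens $C$ --- a contradiction that kills the odd-$j$ adjacencies. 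A similar count at the end of each segment shows every $|S_i|$ is odd, so the odd-position vertices $S'$ satisfy $|V(C)|=2|S'|$; one further application of the same counting shows $S'$ is independent and that no vertex of $V(G)\setminus V(C)$ has a neighbour in $S'$. Taking $S=V(C)\setminus S'$ then gives $c(G-S)=|V(G)|-|V(C)|+|S'|>\tfrac{1}{2}|V(C)|=|S|$, contradicting $\tau(G)\ge 1$. This alternation structure --- not a density dichotomy on the bipartite adjacency between $X_h$ and $X_h^+$ --- is where the threshold $2k$ enters (it guarantees $(k+2)+(k+2)-2k>2$ in the inclusion-exclusion step), and without it your outline cannot be completed as stated.
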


Furthermore,  we show the following stronger result.

\begin{theorem}\label{li}
Let $k$ be an integer with $k\ge 3$. Every $1$-tough  $(2k-2)$-connected $(P_2\cup kP_1)$-free graph  is Hamiltonian.
\end{theorem}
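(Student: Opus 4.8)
The plan is to prove Theorem~\ref{li} by contradiction, taking a maximally non-Hamiltonian counterexample and exploiting the $(P_2\cup kP_1)$-free condition to force a rich structure that contradicts either $1$-toughness or $(2k-2)$-connectivity. Let $G$ be a $1$-tough $(2k-2)$-connected $(P_2\cup kP_1)$-free graph that is not Hamiltonian, and choose a longest cycle $C$ in $G$. Since $G$ is $(2k-2)$-connected and $k\ge 3$, $G$ is at least $4$-connected, so $C$ is reasonably long, and there is a vertex $x\in V(G)\setminus V(C)$. Fixing an orientation of $C$, for each neighbor $u$ of $x$ on $C$ I would consider the successor $u^+$ along $C$; by the standard maximality-of-$C$ argument (the Hopping Lemma / crossing-chord technique), these successors form an independent set in $G$ that is also non-adjacent to $x$, and similar statements hold for predecessors. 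The goal is to extract from this independent structure a large induced copy of $kP_1$ together with an induced $P_2$ disjoint from and non-adjacent to it, contradicting $(P_2\cup kP_1)$-freeness.

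The key steps, in order, are as follows. First I would establish the basic forbidden-configuration lemmas for a longest cycle: if $x\notin V(C)$ has neighbors $u_1,\dots,u_m$ on $C$ then the successors $u_1^+,\dots,u_m^+$ are pairwise non-adjacent and each is non-adjacent to $x$ (otherwise one builds a longer cycle). Second, using the connectivity bound $m\ge 2k-2$, I would have at least $2k-2$ such successors, giving a large independent set $I$ among them together with $x$, so $I\cup\{x\}$ already contains many pairwise non-adjacent vertices. Third, I would locate an edge $e=yz$ that is induced (a $P_2$) and whose endpoints are non-adjacent to $k$ chosen vertices of $I\cup\{x\}$; candidate edges come from consecutive pairs on $C$ lying in segments between the $u_i$, or from a second vertex off $C$, or from the structure around $x$ itself. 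Assembling $e$ together with $k$ independent vertices that avoid $N[y]\cup N[z]$ yields an induced $P_2\cup kP_1$, the desired contradiction.

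The main obstacle, and the place where the hypotheses $k\ge 3$ and $(2k-2)$-connectivity must be spent carefully, is guaranteeing that the induced $P_2$ can be chosen \emph{disjoint from and anti-complete to} a full set of $k$ independent vertices simultaneously. Merely having $2k-2$ independent successors is not by itself enough: one must control the adjacencies between the successors $u_i^+$ and the interior vertices of the cyclic segments, and rule out that every candidate edge $yz$ sees too many of the independent vertices. I expect this to require a counting argument balancing the number of available independent vertices (at least $2k-2$, i.e.\ at least $k$ after discarding those adjacent to a fixed edge) against the connectivity, precisely explaining why the threshold can be lowered from $2k$ to $2k-2$ only when $k\ge 3$. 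A secondary difficulty is handling the degenerate cases where $V(G)\setminus V(C)$ is small or where $x$ has few neighbors spread unevenly on $C$; these I would dispatch by invoking $1$-toughness, since a vertex cut separating the off-cycle vertices from segments of $C$ would otherwise produce more components than $|S|$ allows.
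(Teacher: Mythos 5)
There is a genuine gap in your endgame, and it is not repairable along the lines you sketch: you plan to reach the final contradiction by directly exhibiting an induced $P_2\cup kP_1$, relegating $1$-toughness to ``degenerate cases.'' But the theorem is false without the toughness hypothesis, so any proof whose main engine is only connectivity plus freeness must break down. Concretely, $K_{2k-2,2k-1}$ is $(2k-2)$-connected, non-Hamiltonian, and $(P_2\cup kP_1)$-free (the two endpoints of any edge cover both sides of the bipartition, so no vertex is anticomplete to an edge and there is not even an induced $P_2\cup P_1$), yet its toughness is $\frac{2k-2}{2k-1}<1$. Hence in a putative counterexample whose local structure resembles such a dense bipartite graph, the configuration you want --- an edge $yz$ anticomplete to $k$ independent vertices drawn from $\{x\}\cup N_C(x)^+$ and segment interiors --- need not exist at all, and no counting of successors can force it. A correct proof must make $1$-toughness carry the final contradiction, which is exactly what the paper does: $(P_2\cup kP_1)$-freeness and longest-cycle exchange arguments are used only to establish structure, and the proof ends by producing a vertex cut $S$ with $c(G-S)>|S|$.

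The missing structural idea is a parity analysis along the segments of $C$ between consecutive neighbors $x_i,x_{i+1}$ of the off-cycle vertex $x$. The paper proves by induction on the position $j$ (Claim \ref{c3}) that vertices of $S_i$ at odd distance from $x_i$ have \emph{no} neighbors in $N_C(x)^+$, while vertices at even distance are adjacent to all but at most $k-2$ of $N_C(x)^+$; it then shows (Claims \ref{even}--\ref{odd}) that every segment has odd length, so the odd-position vertices $S'$ constitute exactly half of $V(C)$, form an independent set (Claim \ref{in}), and receive no edges from $V(G)\setminus V(C)$ (Claim \ref{c6}); deleting $S=V(C)\setminus S'$ then leaves $|V(G)|-|V(C)|+|S'|>\frac{1}{2}|V(C)|=|S|$ components, contradicting $\tau(G)\ge 1$. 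Moreover, the step you flag as ``a counting argument I expect to work'' --- why $2k$-connectivity can be relaxed to $2k-2$ when $k\ge 3$ --- is precisely where the paper must work hardest: with only $t\ge 2k-2$ successors, the inclusion--exclusion bound in the inductive step yields just \emph{one} common neighbor instead of two, and the paper handles this by an extremal analysis of the putative even-length segments (forcing $t=2k-2$ exactly, pinning the unique common neighbor at position $i+k-1$, propagating evenness of $|S_j|$ around the whole cycle via Claim \ref{evenn}, and finally constructing an explicit longer cycle). Your proposal contains neither the parity induction nor this extremal case analysis, and the balancing count you describe cannot substitute for them.
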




Theorems \ref{xu} and \ref{li} echo the Chav\'{a}tal-Erd\"{o}s  theorem \cite{CE} (Theorem \ref{coe}).
Note that a non-complete $(k-1)$-tough graph must be $(2k-2)$-connected for $k\ge 3$. An immediate consequence of Theorem \ref{li} is as follows, from which we also have Theorem \ref{o} due to Hatfield and Grimm \cite{HG}.

\begin{corollary} \label{zhong}
For any integer $k\ge 3$, every $(k-1)$-tough $(P_2\cup kP_1)$-free graph is Hamiltonian.
\end{corollary}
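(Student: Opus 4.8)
The plan is to deduce Corollary~\ref{zhong} directly from Theorem~\ref{li} by converting the toughness hypothesis into the connectivity hypothesis required there. The one extra ingredient is Chv\'{a}tal's classical bound relating toughness and vertex connectivity: every non-complete graph $G$ satisfies $\kappa(G)\ge 2\tau(G)$, where $\kappa(G)$ denotes the vertex connectivity of $G$. Indeed, choosing a minimum vertex cut $S$ of $G$ gives $|S|=\kappa(G)$ and $c(G-S)\ge 2$, so the definition of toughness yields $\tau(G)\le |S|/c(G-S)\le \kappa(G)/2$, which rearranges to the stated inequality.

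Let $G$ be a $(k-1)$-tough $(P_2\cup kP_1)$-free graph with $k\ge 3$. First I would dispose of the complete case: a complete graph on at least three vertices is Hamiltonian, and complete graphs on fewer vertices are excluded by the standing convention that Hamiltonicity is asserted only for graphs on at least three vertices. So I may assume $G$ is non-complete. Next I would check the two hypotheses of Theorem~\ref{li}. Since $\tau(G)\ge k-1\ge 2>1$, the graph $G$ is in particular $1$-tough. Applying the inequality above, $\kappa(G)\ge 2\tau(G)\ge 2(k-1)=2k-2$, so $G$ is $(2k-2)$-connected.

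With both hypotheses verified, $G$ is a $1$-tough $(2k-2)$-connected $(P_2\cup kP_1)$-free graph, and Theorem~\ref{li} immediately gives that $G$ is Hamiltonian. I expect no genuine obstacle in this argument: all of the real difficulty lies in Theorem~\ref{li}, and the passage from toughness to connectivity is routine. The only points needing a word of care are the treatment of complete graphs and the boundary conventions for very small orders, both settled at the start.
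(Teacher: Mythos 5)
Your proof is correct and takes essentially the same route as the paper, which justifies the corollary by the one-line observation that a non-complete $(k-1)$-tough graph is $(2k-2)$-connected (via $\kappa(G)\ge 2\tau(G)$) and then applies Theorem~\ref{li}. Your extra care with the complete and small-order cases is fine but adds nothing beyond the paper's argument.
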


\section{Preliminaries}

We introduce some notations.

For $v\in V(G)$,  $N_G(v)$ denotes the neighborhood of $v$ in $G$.
For $v\in V(G)$ and a subgraph $F$ of $G$, let
$N_F(v)=N_G(v)\cap V(F)$. For $S\subseteq V(G)$, $N_F(S)=\bigcup_{v\in S} N_F(v)$.
If $H$ is a subgraph of $G$, then we write $N_F(H)$ for $N_F(V(H))$.

Let $C$ be an oriented cycle, where the orientation is always clockwise. For $u\in V(C)$,
denote by $u^{+1}$ the immediate successor of $u$ and $u^{-1}$ the immediate predecessor of $u$ on $C$.
For an integer  $\ell\ge 2$,
denote by $u^{+\ell}$ the immediate successor of $u^{+(\ell-1)}$ and $u^{-\ell}$ the immediate predecessor of $u^{-(\ell-1)}$ on $C$.
For convenience, we write  $u^{+}$ for  $u^{+1}$ and $u^{-}$ for  $u^{-1}$.
For $S\subseteq V(C)$, let $S^+=\{u^+:u\in S\}$. 
For $u,v\in V(C)$,  $u\overrightarrow{C}v$ denotes the segment of $C$ from $u$ to $v$ which follows the orientation of $C$, while $u\overleftarrow{C}v$ denotes the opposite segment of $C$ from $u$ to $v$.
Particularly, if $u=v$, then $u\overrightarrow{C}v=u$ and $u\overleftarrow{C}v=u$.

For a graph $G$ with $u,v\in V(G)$, a $(u,v)$-path is a path from $u$ to $v$ in $G$.

\section{Proof of Theorem \ref{xu}}



\begin{proof}[Proof of Theorem \ref{xu}]
Suppose to the contrary that $G$ is a $1$-tough $2k$-connected $(P_2\cup kP_1)$-free graph but $G$ is not Hamiltonian. Then $G$ is not complete.
As $G$ is $2k$-connected, there are cycles in $G$.
Let $C$ be a longest cycle in $G$. As $G$ is not Hamiltonian, $V(G)\setminus V(C)\ne \emptyset$.
Observe that  $N_C(H)\ne \emptyset$ for any component $H$ of $G-V(C)$.

\begin{claim} \label{Claim1}
For any component $H$ of $G-V(C)$,
$N_C(H)^+$ is an independent set, and $N_C(H)\cap N_C(H)^+=\emptyset$.
\end{claim}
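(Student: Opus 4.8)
The plan is to exploit the maximality of the longest cycle $C$: any configuration forbidden by the claim would allow me to splice in vertices of the component $H$ and build a cycle strictly longer than $C$, contradicting its choice. Throughout I will use that $H$ is connected, so that for any $u,w\in N_C(H)$ there is a $(u,w)$-path whose internal vertices all lie in $H$; in particular such a path has at least one internal vertex. (If $u$ and $w$ share a neighbor in $H$ this path has one internal vertex; otherwise connectivity of $H$ supplies a longer one.)

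For the disjointness statement $N_C(H)\cap N_C(H)^+=\emptyset$, I would argue by contradiction: suppose some vertex lies in both sets, so that both $v^-$ and $v$ belong to $N_C(H)$ for some $v$. Taking a $(v^-,v)$-path $P$ with all internal vertices in $H$ and replacing the edge $v^-v$ of $C$ by $P$ yields a cycle through all of $V(C)$ together with at least one vertex of $H$. This cycle is longer than $C$, a contradiction.

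For the independence of $N_C(H)^+$, I would again suppose the contrary: let $u,w\in N_C(H)$ with $u^+w^+\in E(G)$. The previous part guarantees $u^+,w^+\notin N_C(H)$, so in particular $w\ne u^+$, $u\ne w^+$, and $u\ne w$. Choosing a $(u,w)$-path $P$ with all internal vertices in $H$, I would form the closed walk
\[
u\,P\,w\overleftarrow{C}u^+\,w^+\overrightarrow{C}u,
\]
that is: traverse $P$ from $u$ to $w$, follow $C$ backward from $w$ to $u^+$, use the chord $u^+w^+$, and finally follow $C$ forward from $w^+$ back to $u$. The two arcs $w\overleftarrow{C}u^+$ and $w^+\overrightarrow{C}u$ partition $V(C)$, so this is a cycle covering every vertex of $C$ plus the internal vertices of $P$, hence strictly longer than $C$, again a contradiction.

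The only delicate point is the bookkeeping for the displayed cycle: I must check that the two $C$-arcs are internally disjoint and really do cover $C$, and that the walk is a genuine cycle. This is precisely where the disjointness part is needed, since it excludes the degenerate adjacencies $w=u^+$ and $u=w^+$ that would collapse an arc. Once these are ruled out, the arcs are complementary, $P$ contributes at least one fresh vertex of $H$, and the length comparison is immediate. I expect this orientation-and-disjointness verification to be the main thing to get right; the remainder is a routine rerouting argument.
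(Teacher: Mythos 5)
Your proposal is correct and uses essentially the same argument as the paper: the chord $u^+w^+$ together with a $(u,w)$-path routed through the connected component $H$ yields a cycle strictly longer than $C$, and your degenerate-case bookkeeping ($w\ne u^+$, $u\ne w^+$) is sound. The only (minor) difference is the order of the two parts: you prove $N_C(H)\cap N_C(H)^+=\emptyset$ first by a direct rerouting across the edge $v^-v$ and use it to exclude degeneracies, whereas the paper proves independence of $N_C(H)^+$ first and then obtains disjointness in one line, since $v\in N_C(H)\cap N_C(H)^+$ would make $v$ and $v^+$ adjacent vertices of the independent set $N_C(H)^+$.
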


\begin{proof}
Suppose that $N_C(H)^+$ is not independent for some component $H$ of $G-V(C)$.
Let $N_C(H)=\{u_1,\dots,u_t\}$.
Then  $u_i^+u_j^+\in E(G)$ for some $i$ and some $j$ with $1\le i<j\le t$.
Let $u_i'$ be a neighbor of $u_i$ in $H$ and $u_j'$ a neighbor  $u_j$ in $H$.
As $H$ is connected, there is a $(u_j',u_i')$-path $P$ in $H$.
Then
\[
u_i\overleftarrow{C}u_j^+u_i^+\overrightarrow{C}u_ju_j'Pu_i'u_i
\]
is a cycle of $G$ longer than $C$, a contradiction.
So $N_C(H)^+$ is an independent set of $G$. 

As $N_C(H)^+$ is an independent set of $G$, we have $N_C(H)\cap N_C(H)^+= \emptyset$.
\end{proof}

\begin{claim} \label{Claim2}
Every component of $G-V(C)$ is trivial.
\end{claim}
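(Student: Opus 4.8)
The plan is to argue by contradiction: suppose some component $H$ of $G-V(C)$ is nontrivial, so that $H$ contains at least one edge, say $xy$ with $x,y\in V(H)$. This edge is an induced $P_2$ of $G$, and the goal is to complete it to an induced $P_2\cup kP_1$ using vertices drawn from the set $N_C(H)^+$, thereby contradicting the assumption that $G$ is $(P_2\cup kP_1)$-free.

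First I would verify that there are enough vertices available. Since $N_C(H)$ and $N_C(H)^+$ are disjoint subsets of $V(C)$ of equal size by Claim \ref{Claim1}, we have $|V(C)|\ge 2|N_C(H)|$, so $V(C)\setminus N_C(H)\ne\emptyset$. Consequently $N_C(H)$ is a vertex cut of $G$ separating $V(H)$ from $V(C)\setminus N_C(H)$, and $2k$-connectivity forces $|N_C(H)|\ge 2k$, whence $|N_C(H)^+|=|N_C(H)|\ge 2k\ge k$. Thus we may select any $k$ vertices $w_1,\dots,w_k$ from $N_C(H)^+$.

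The key observation is that these $k$ vertices, together with the edge $xy$, induce exactly $P_2\cup kP_1$. By Claim \ref{Claim1} the set $N_C(H)^+$ is independent, so $\{w_1,\dots,w_k\}$ induces $kP_1$. Moreover, since $N_C(H)\cap N_C(H)^+=\emptyset$, each $w_i$ lies outside $N_C(H)$ and therefore has \emph{no} neighbor in $V(H)$; in particular $w_i$ is adjacent to neither $x$ nor $y$. Hence there are no edges between $\{x,y\}$ and $\{w_1,\dots,w_k\}$, and $G[\{x,y,w_1,\dots,w_k\}]$ is an induced $P_2\cup kP_1$, the desired contradiction.

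I do not expect a genuine obstacle here: once Claim \ref{Claim1} is in hand, the only points requiring care are confirming that $N_C(H)$ is a legitimate cut (so that the $2k$-connectivity bound applies and yields at least $k$ candidate vertices) and that membership in $N_C(H)^+$ rather than $N_C(H)$ rules out adjacency to \emph{all} of $H$, not merely to the attachment vertices. Both follow directly from the disjointness $N_C(H)\cap N_C(H)^+=\emptyset$ established in Claim \ref{Claim1}.
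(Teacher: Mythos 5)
Your proof is correct and takes essentially the same route as the paper's: both use Claim \ref{Claim1} to show $N_C(H)\cap N_C(H)^+=\emptyset$, deduce that $N_C(H)$ is a vertex cut so that $2k$-connectivity gives $|N_C(H)^+|\ge 2k$, and then combine an edge of $H$ with independent vertices of $N_C(H)^+$ (which have no neighbors in $H$) to exhibit an induced $P_2\cup kP_1$. The only cosmetic difference is that you select exactly $k$ vertices from $N_C(H)^+$, whereas the paper uses the whole set to find $P_2\cup 2kP_1$ and hence $P_2\cup kP_1$; your added remarks (that $V(C)\setminus N_C(H)\ne\emptyset$ makes the cut legitimate, and that disjointness from $N_C(H)$ rules out adjacency to all of $H$) merely spell out details the paper leaves implicit.
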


\begin{proof}
Suppose to the contrary that there exists a nontrivial component $H$ of $G-V(C)$.
Then $H$ contains an edge $uv$.
By Claim \ref{Claim1},
$N_C(H)\cap N_C(H)^+=\emptyset$, so $G-N_C(H)$ is not connected, and
$N_C(H)$ is a vertex cut of $G$.
As $G$ is $2k$-connected, we have $|N_C(H)^+|=|N_C(H)|\ge 2k$.
By Claim \ref{Claim1}, $N_C(H)^+$ is an independent set of $G$ and then the graph  $G\left[N_C(H)^+\cup \{u,v\}\right]$ contains exactly one edge $uv$ and so it contains $P_2\cup 2kP_1$ as an induced subgraph. Thus,   $G$  contains $P_2\cup kP_1$ as an induced subgraph, a contradiction.
\end{proof}

Let $x\in V(G)\setminus V(C)$ and $N_C(x)=\{x_1,\dots,x_t\}$. By Claim \ref{Claim2}, $x$ is an isolated vertex of $G-V(C)$.  So, by Claim \ref{Claim1} and the fact that  $G$ is $2k$-connected,
$t\ge 2k$.
For $i=1,\dots,t$,
denote by $S_i$  the vertex set of the segment $x_i^+\overrightarrow{C}x_{i+1}^-$ of $C$ from $x_i^+$ to $x_{i+1}^-$,  where $x_{t+1}=x_1$.
As $x_i$ and $x_{i+1}$ are not consecutive vertices on $C$ by Claim \ref{Claim1},  $|S_i|\ge 1$.

\begin{claim} \label{Claim3}
For $i=1,\dots,t$,
$|S_i|$ is odd and $N_C(x_i^{+j})\cap N_C(x)^+=\emptyset$
if $ 1\le j\le |S_i|$ with $j\equiv 1~(\bmod~2)$.
\end{claim}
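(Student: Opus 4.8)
Write $s=|S_i|$ and $y_\ell=x_i^{+\ell}$, so that $S_i=\{y_1,\dots,y_s\}$ with $y_s=x_{i+1}^-$, and set $A=N_C(x)^+$; recall that $A$ is independent and $|A|=t\ge 2k$ by Claim~\ref{Claim1} and the connectivity bound. I would prove the non-adjacency statement by strong induction on the odd index $j$, the guiding principle being that any illegal edge from an odd-position vertex to $A$ can be combined with the edges $xx_i$ and $xx_{i+1}$ to splice $x$ into $C$ and produce a cycle longer than $C$, contradicting its maximality. The base case $j=1$ is immediate: $y_1=x_i^+\in A$, and since $A$ is independent, $N_C(y_1)\cap A=\emptyset$.

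Before the inductive step I would record that the even-position vertices are rich towards $A$. For each admissible $\ell$, the vertex $y_{2\ell-1}$ has no neighbour in $A$ (by the induction hypothesis, or by independence of $A$ when $2\ell-1=1$), while $y_{2\ell}\notin A$. Applying $(P_2\cup kP_1)$-freeness to the edge $y_{2\ell-1}y_{2\ell}$ and the independent set $A$ then forces $y_{2\ell}$ to be adjacent to all but at most $k-1$ vertices of $A$, so $y_{2\ell}$ has at least $t-k\ge k$ neighbours in $A$; otherwise $k$ non-neighbours in $A$ together with $y_{2\ell-1}y_{2\ell}$ would form an induced $P_2\cup kP_1$.

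For the inductive step, suppose $j$ is odd, $j\le s$, and $y_j$ has a neighbour $x_m^+\in A$. The generalized chords $x_i\,x\,x_m$ (through $x$) and $y_jx_m^+$ cross on $C$, so rerouting along them yields the cycle $x_i\,x\,x_m\overleftarrow{C}y_jx_m^+\overrightarrow{C}x_i$, which contains $x$ and every vertex of $C$ except the prefix $y_1,\dots,y_{j-1}$. As $j-1$ is even, I would reinsert this prefix in consecutive pairs $(y_{2\ell-1},y_{2\ell})$: each even vertex $y_{2\ell}$ has a neighbour of $A$ lying on the rerouted cycle, and the short path $y_{2\ell-1}y_{2\ell}$ can be spliced in next to that neighbour. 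Reinserting all pairs restores every vertex of $C$ while retaining $x$, giving a cycle longer than $C$, the desired contradiction. The oddness of $|S_i|$ is forced by the same mechanism: were $s$ even, the terminal vertex $y_s=x_{i+1}^-$ would be an even position carrying a neighbour in $A$, and the analogous reroute anchored at the edge $xx_{i+1}$ would again produce a longer cycle.

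I expect the reinsertion of the leftover prefix to be the main obstacle. The crossing-chord reroute itself is routine, but splicing $y_1,\dots,y_{j-1}$ back in requires showing that the relevant adjacencies, between the segment vertices and the cycle vertices adjacent to their $A$-neighbours, are genuinely available and mutually compatible. It is precisely here that the parity of the indices and the oddness of $|S_i|$ must be tracked carefully, so that every leftover vertex is accounted for and the net effect is a strict gain of the single vertex $x$.
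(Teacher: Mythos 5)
There is a genuine gap, and it sits exactly where you predicted: the reinsertion step fails. After your reroute $x_i\,x\,x_m\overleftarrow{C}y_j\,x_m^+\overrightarrow{C}x_i$, the new cycle has length $|V(C)|+1-(j-1)$, so for odd $j\ge 3$ you must splice back \emph{all} of $y_1,\dots,y_{j-1}$ just to break even. But splicing the pair $(y_{2\ell-1},y_{2\ell})$ into the cycle requires an edge $uv$ of the rerouted cycle with $u\in N(y_{2\ell-1})$ and $v\in N(y_{2\ell})$, and your induction hypothesis says precisely that $y_{2\ell-1}$ has \emph{no} neighbours in $A=N_C(x)^+$; its only known neighbours are its two $C$-neighbours $y_{2\ell-2}$ and $y_{2\ell}$, both of which lie in the deleted prefix. (At this stage of the argument nothing whatever is known about adjacencies from odd-position vertices to $V(C)\setminus A$ --- indeed the paper later shows the set of odd-position vertices is independent, so these vertices really are poorly connected.) The richness of $y_{2\ell}$ towards $A$ gives you one usable anchor per pair, but a path insertion needs anchors at \emph{both} endpoints, so the surgery cannot be completed, and no amount of parity bookkeeping repairs it.

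The paper avoids deletion-plus-reinsertion entirely, and the device it uses is the idea missing from your plan: work with a fixed subset $X\subseteq N_C(x)^+$ of size exactly $2k$ and strengthen the induction to the quantitative statement $\left|N_C(x_i^{+j})\cap X\right|\ge k+2$ for even $j$; moreover, if an odd-position $x_i^{+j}$ had \emph{any} neighbour $x_r^+\in X$, the same $P_2\cup kP_1$ count (applied to the edge $x_i^{+j}x_r^+$) forces $\left|N_C(x_i^{+j})\cap X\right|\ge k+2$ as well. Then the two \emph{consecutive} cycle vertices $x_i^{+(j-1)}$ and $x_i^{+j}$ have at least $(k+2)+(k+2)-2k>2$ common neighbours $x_p^+,x_q^+$ in $X$, and the single cycle
\[
x_i^{+(j-1)}x_p^+\overrightarrow{C}x_q\,x\,x_p\overleftarrow{C}x_i^{+j}x_q^{+}\overrightarrow{C}x_i^{+(j-1)}
\]
contains every vertex of $C$ together with $x$ --- a strict gain of one vertex with nothing left over. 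The oddness of $|S_i|$ is handled by the same mechanism applied to the consecutive pair $(x_i^{+|S_i|},x_{i+1})$, using the edge $xx_{i+1}$ to show $x_{i+1}$ has $\ge k+1$ neighbours in $X$. Your intermediate lemma (each even-position vertex misses at most $k-1$ vertices of $A$) is correct and is the analogue of the paper's richness bound; in fact, since $t\ge 2k$, two consecutive vertices each missing at most $k-1$ vertices of $A$ already share $\ge t-2(k-1)\ge 2$ common neighbours, so your counting could be redirected to the paper's one-shot cycle exchange. As written, however, the chosen surgery is the wrong one and the proof does not go through.
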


\begin{proof} Firstly, we prove the second part.

Assume that $i=1$ as the argument applies also to the case $i=2,\dots, t$.

Take an arbitrary $X\subseteq N_C(x)^+$ with $|X|=2k$ and $x_1^+\in X$. We will show that
\begin{equation}\label{Xu}
|N_C(x_1^{+j})\cap X|\begin{cases}
=0 & \mbox{if $j$ is odd},\\
\ge k+2 &  \mbox{if $j$ is even}
\end{cases}
\end{equation}
by induction on $j$ for integers $j=1,\dots, |S_1|$.
If $j=1$, then by Claim \ref{Claim1}, $N_C(x)^+$ is an independent set of $G$, so  $N_C(x_1^{+1})\cap N_C(x)^+=\emptyset$, i.e., \eqref{Xu} follows for $j=1$. Suppose that
\eqref{Xu} is not true for $j=2$. Then $\left|N_C(x_1^{+2})\cap X\right|\le  k+1$. So $\left|(X\setminus N_C(x_1^{+2})) \cup \{x,x_1^+,x_1^{+2}\}\right|\ge k+2$, and $G\left[(X\setminus N_C(x_1^{+2})) \cup \left\{x,x_1^+,x_1^{+2}\right\}\right]$ contains exactly one edge $x_1^{+}x_1^{+2}$, so it contains
$P_2\cup kP_1$ as an induced subgraph,
a contradiction. Thus, \eqref{Xu} follows for $j=2$.

Let $j$ be an integer with $3\le j\le |S_1|$. Suppose that \eqref{Xu} holds for $j-1$.

Suppose that $j$ is odd. By the inductive hypothesis, $\left|N_C(x_1^{+(j-1)})\cap X\right|\ge k+2$.
Suppose that $N_C(x_1^{+j})\cap X\ne \emptyset$.
Then there exists some $x_r^+\in N_C(x_1^{+j})\cap X$.
If $\left|N_C(x_1^{+j})\cap X\right|\le k+1$, then $\left| (X\setminus N_C(x_1^{+j}))\cup \left\{x,x_1^{+j},x_r^+\right\}\right|\ge k+2$ and $G\left[  (X\setminus N_C(x_1^{+j}))\cup \left\{x,x_1^{+j},x_r^+\right\}\right]$ contains exactly one edge $x_1^{+j}x_r^+$, so it contains $P_2\cup kP_1$ as an induced subgraph, a contradiction.
Then $\left|N_C(x_1^{+j})\cap X\right|\ge k+2$ and hence
\begin{align*}
\left|N_C(x_1^{+(j-1)})\cap N_C(x_1^{+j}) \cap X\right|
&\ge \left|N_C(x_1^{+(j-1)})\cap X\right|+\left|N_C(x_1^{+j}) \cap X\right|-|X|\\
&\ge  k+2+k+2-2k>2.
\end{align*}
Assume that $x_p^+,x_q^+\in N_C(x_1^{+(j-1)})\cap N_C(x_1^{+j})\cap X$ with $1\le p<q\le t$.
If $p\ge 2$, then
\[
x_1^{+(j-1)}x_p^+\overrightarrow{C}x_qxx_p\overleftarrow{C}x_1^{+j}x_q^{+}\overrightarrow{C}x_1^{+(j-1)}
\]
is a cycle of $G$ longer than $C$, a contradiction.
So $p=1$, and then
\[
x_1^{+(j-1)}x_q^{+}\overrightarrow{C}x_1xx_q\overleftarrow{C}x_1^{+j}x_1^{+}\overrightarrow{C}x_1^{+(j-1)}
\]
is a cycle of $G$ longer  than $C$, also a contradiction.
Therefore, $N_C(x_1^{+j})\cap X=\emptyset$. This is \eqref{Xu}  for odd $j$.

Now suppose that $j$ is even.
By the inductive hypothesis,  $N_C(x_1^{+(j-1)})\cap X=\emptyset$.
If $\left|N_C(x_1^{+j})\cap X\right|\le k+1$, then $\left| (X\setminus N_C(x_1^{+j}))\cup \left\{x,x_1^{+j},x_1^{+(j-1)}\right\}\right|\ge k+2$, and 
$G\left[(X\setminus N_C(x_1^{+(j-1)}))\cup \left\{x,x_1^{+j},x_1^{+(j-1)}\right\} \right]$
contains exactly one edge $x_1^{+j}x_1^{+(j-1)}$, so it contains $P_2\cup kP_1$ as an induced subgraph, a contradiction.
So $\left|N_C(x_1^{+j})\cap X\right|\ge k+2$. This is \eqref{Xu}  for even $j$.

If $1\le j\le |S_i|$ with $j\equiv 1~(\bmod~2)$, then by \eqref{Xu},  $N_C(x_1^{+j})\cap X=\emptyset$
for any $X\subseteq N_C(x)^+$ with $|X|=2k$ and $x_1^+\in X$, so
\[
N_C(x_1^{+j})\cap N_C(x)^+=N_C(x_1^{+j})\bigcap \bigcup_{X\subseteq N_C(x)^+ \atop {|X|=2k \atop x_1^+\in X} }X=\emptyset.
\]
This proves the second part.

Secondly, we prove the first part.
Suppose  that $|S_1|$ is even.
By \eqref{Xu}, $\left|N_C(x_1^{+|S_1|})\cap X\right|\ge k+2$.
If $\left|N_C(x_2)\cap X\right|\le k$, then $G\left[X\setminus N_C(x_2)\cup \{x,x_2\}\right]$ contains exactly one edge $xx_2$ and so it contains $P_2\cup kP_1$ as an induced subgraph, a contradiction.
So $\left|N_C(x_2)\cap X\right|\ge k+1$ and then by similar argument as above, we have $\left|N_C(x_2)\cap N_C(x_1^{+|S_1|})\cap X\right|\ge 2$ and so
we may obtain a cycle of $G$ longer than $C$, a contradiction.
Therefore, $|S_1|$ is odd, as desired.
\end{proof}


Let $S'=\cup_{i=1}^tS_i'$, where $S_i'=\left\{x_i^{+j}\in S_i:j\equiv 1~(\bmod~2)\right\}$ for  $i=1,\dots, t$.
Suppose that there is an edge $uv$ in $G[S']$. By Claim \ref{Claim3},
$N_C(u)\cap N_C(x)^+=\emptyset$ and $N_C(v)\cap N_C(x)^+=\emptyset$. By Claim \ref{Claim1},
$N_C(x)^+$ is an independent set. So
$uv$ is the unique edge in $G\left[N_C(x)^+\cup\{u,v\}\right]$. Recall that $|N_C(x)^+|\ge 2k$.
So $G\left[N_C(x)^+\cup\{u,v\}\right]$ contains $P_2\cup kP_1$ as an induced subgraph, a contradiction. So $S'$ is an independent set of $G$.
By Claim \ref{Claim3}, $|S_i|$ is odd for each $i=1,\dots,t$, so \[
|V(C)|=2|S'|.\]

\begin{claim} \label{Claim4}
For any $y\in V(G)\setminus  V(C)$, $N_C(y)\cap S'=\emptyset$.
\end{claim}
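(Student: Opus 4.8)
The plan is to rule out any edge from $V(G)\setminus V(C)$ to $S'$ by combining a single longer-cycle construction with the $(P_2\cup kP_1)$-free hypothesis. Fix $y\in V(G)\setminus V(C)$. If $y=x$, then $N_C(x)=\{x_1,\dots,x_t\}$ and no $x_i$ lies in any segment $S_i$, so $N_C(x)\cap S'=\emptyset$ and there is nothing to prove; hence I may assume $y\ne x$. By Claim~\ref{Claim2} the vertex $y$ is isolated in $G-V(C)$, so $N_C(y)=N_G(y)$, and $y$ is non-adjacent to $x$. I would carry out the argument in two steps: first bound the number of neighbours of $y$ in $N_C(x)^+$, then use this bound to forbid a neighbour of $y$ in $S'$.

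The crux, and the step I expect to be the main obstacle, is to show that $y$ is adjacent to \emph{at most one} vertex of $N_C(x)^+$. Suppose not, so that $y$ is adjacent to two distinct vertices $x_p^+,x_q^+\in N_C(x)^+$, say with $1\le p<q\le t$. Deleting the edges $x_px_p^+$ and $x_qx_q^+$ cuts $C$ into the two arcs $x_p^+\overrightarrow{C}x_q$ and $x_q^+\overrightarrow{C}x_p$, which are genuine (as $x_p^+\ne x_q$ and $x_q^+\ne x_p$ by Claim~\ref{Claim1}) and together cover $V(C)$. The key idea is to splice \emph{both} $x$ and $y$ back in simultaneously: $x$ bridges its neighbours $x_p,x_q$ and $y$ bridges its neighbours $x_p^+,x_q^+$, yielding
\[
x_p^+\overrightarrow{C}x_q\,x\,x_p\overleftarrow{C}x_q^+\,y\,x_p^+.
\]
This is a cycle through every vertex of $C$ together with the two extra vertices $x$ and $y$, hence longer than $C$, a contradiction. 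Therefore $\left|N_C(y)\cap N_C(x)^+\right|\le 1$.

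Finally I would derive the claim. Suppose to the contrary that $y$ is adjacent to some $w\in S'$. Since $w\in S'$, Claim~\ref{Claim3} gives $N_C(w)\cap N_C(x)^+=\emptyset$, so $w$ is non-adjacent to all of $N_C(x)^+$. Because $|N_C(x)^+|=t\ge 2k$ and $y$ has at most one neighbour in $N_C(x)^+$, the set $N_C(x)^+\setminus\{w\}$ contains at least $2k-1\ge k$ vertices that miss $y$ (if $w\in N_C(x)^+$ then $w$ is the unique possible neighbour of $y$ in $N_C(x)^+$, so $y$ misses all of $N_C(x)^+\setminus\{w\}$; if $w\notin N_C(x)^+$ then $N_C(x)^+\setminus\{w\}=N_C(x)^+$ has at least $2k$ vertices, at most one adjacent to $y$). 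Choose $k$ such vertices as $A$. Then $A$ is independent (being a subset of the independent set $N_C(x)^+$), $A$ is non-adjacent to $w$ and to $y$, and $yw\in E(G)$, so $G[A\cup\{y,w\}]$ is exactly $P_2\cup kP_1$ induced in $G$, a contradiction. Hence $N_C(y)\cap S'=\emptyset$, as claimed.
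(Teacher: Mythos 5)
Your proposal is correct and takes essentially the same approach as the paper: the identical double-splice cycle $x_p^+\overrightarrow{C}x_q\,x\,x_p\overleftarrow{C}x_q^+\,y\,x_p^+$ limits the neighbours of $y$ in $N_C(x)^+$, and the final contradiction is the same forbidden-subgraph argument built on the independence of $N_C(x)^+$ (Claim~\ref{Claim1}) and the non-adjacency of $S'$-vertices to $N_C(x)^+$ (Claim~\ref{Claim3}). The only difference is organizational: the paper first eliminates the case $\left|N_C(y)\cap N_C(x)^+\right|=1$ outright (via the induced subgraph $G\left[N_C(x)^+\cup\{y\}\right]$, giving $N_C(y)\cap N_C(x)^+=\emptyset$ before attacking $S'$), whereas you carry the weaker bound $\left|N_C(y)\cap N_C(x)^+\right|\le 1$ into the last step and absorb the one possible neighbour when selecting your $k$-vertex independent set $A$ — a mild streamlining that saves one application of the $(P_2\cup kP_1)$-free hypothesis, not a different route.
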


\begin{proof}
The case  $y=x$ is obvious by the definition of $S'$.
Suppose that $y\ne x$.

Firstly, we show that
\[
N_C(y)\cap N_C(x)^+=\emptyset.
\]
Otherwise, $\left|N_C(y)\cap N_C(x)^+\right|\ge 1$.
If $\left|N_C(y)\cap N_C(x)^+\right|\ge 2$,
then \[
x_p\overleftarrow{C}x_q^+yx_p^+\overrightarrow{C}x_qxx_p
\]
is a cycle longer than $C$ for some $x_p^+,x_q^+\in N_C(y)$ with  $1\le p<q\le t$, which is a contradiction.
So $\left|N_C(y)\cap N_C(x)^+\right|=1$, say $y'\in N_C(y)\cap N_C(x)^+$.
By Claim \ref{Claim1}, $N_C(x)^+$ is an independent set of $G$.
Recall that $|N_C(x)^+|\ge 2k$.
Then $G\left[N_C(x)^+\cup \{y\}\right]$ contains exactly one edge $yy'$ and so it contains $P_2\cup kP_1$ as an induced subgraph, also a contradiction.
So $N_C(y)\cap N_C(x)^+=\emptyset$.

Now we show that $N_C(y)\cap S'= \emptyset$. Suppose that this is not true.
Then there is a vertex  $z\in N_C(y)\cap S'$.
Since $N_C(y)\cap N_C(x)^+=\emptyset$ and $N_C(x)^+\subseteq S'$, we have
 $z\notin N_C(x)^+$.
By Claim \ref{Claim3}, $z$ is not adjacent to any vertex in $N_C(x)^+$.
Then $G\left[N_C(x)^+\cup \{y,z\}\right]$  contains exactly one edge $yz$ and so it contains $P_2\cup  kP_1$ as an induced subgraph, a contradiction.
\end{proof}

Let $S=V(C)\setminus S'$.
Then $|S|=\frac{1}{2}|V(C)|$.
By Claim \ref{Claim4}, for any $y\in V(G)\setminus V(C)$, $y$ is not adjacent to any vertex in $S'$. So $(V(G)\setminus V(C))\cup S'$ is an independent set by Claim \ref{Claim2}. So $c(G-S)=|V(G)|-|V(C)|+|S'|>\frac{1}{2}|V(C)|=|S|$ and so
\[
\tau(G)\le \frac{|S|}{c(G-S)}<1,
\]
contradicting to the condition that $\tau(G)\ge 1$.
This completes the proof of Theorem \ref{xu}.
\end{proof}

\section{Proof of Theorem \ref{li}}

\begin{proof}[Proof of Theorem \ref{li}]
Suppose to the contrary that $G$ is a $1$-tough  $(2k-2)$-connected $(P_2\cup kP_1)$-free graph that is not Hamiltonian.
Then $G$ is not complete. As $G$ is $(2k-2)$-connected, there are cycles in $G$.
Let $C$ be a longest cycle in $G$. Then $V(G)\setminus V(C)\ne \emptyset$.
It is evident that  $N_C(H)\ne \emptyset$ for any component $H$ of $G-V(C)$.

By the same argument as in
Claim \ref{Claim1}, we have

\begin{claim} \label{c1}
For any component $H$ of $G-V(C)$,
$N_C(H)^+$ is an independent set, and $N_C(H)\cap N_C(H)^+=\emptyset$.
\end{claim}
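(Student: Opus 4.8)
The plan is to reuse, essentially verbatim, the longest-cycle argument already carried out for Claim \ref{Claim1}. The only facts that argument relies on are that $C$ is a longest cycle of $G$ and that $H$ is a component of $G-V(C)$; neither the $(P_2\cup kP_1)$-freeness nor the connectivity bound is invoked, so nothing is sensitive to the change from `$2k$-connected' to `$(2k-2)$-connected'. Accordingly, I would argue by contradiction: assume $N_C(H)^+$ fails to be independent for some component $H$, write $N_C(H)=\{u_1,\dots,u_t\}$, and fix an edge $u_i^+u_j^+$ with $1\le i<j\le t$.

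The core step is the cycle-lengthening construction. I would pick a neighbor $u_i'$ of $u_i$ in $H$ and a neighbor $u_j'$ of $u_j$ in $H$, use the connectedness of $H$ to produce a $(u_j',u_i')$-path $P$ lying entirely inside $H$ (hence disjoint from $C$), and then splice the two arcs of $C$ together across the chord $u_i^+u_j^+$ and the detour through $H$, forming the closed walk $u_i\overleftarrow{C}u_j^+u_i^+\overrightarrow{C}u_ju_j'Pu_i'u_i$. The point is that the backward arc from $u_i$ to $u_j^+$ and the forward arc from $u_i^+$ to $u_j$ together partition $V(C)$, while $P$ contributes the extra vertices of $H$, so the resulting object is a genuine cycle strictly longer than $C$, contradicting maximality. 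The only place demanding care is the orientation bookkeeping, namely verifying that these two oppositely-directed segments are internally disjoint and jointly cover $V(C)$ exactly once.

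Finally, for the second assertion I would derive $N_C(H)\cap N_C(H)^+=\emptyset$ directly from the independence just established. A vertex $w$ in the intersection lies in $N_C(H)^+$ and, being in $N_C(H)$, has its successor $w^+$ also in $N_C(H)^+$; since $w$ and $w^+$ are consecutive on $C$ they are adjacent, contradicting that $N_C(H)^+$ is independent. I expect no genuine obstacle in this claim: the whole proof is a standard Pósa-type rerouting, and the main subtlety is the segment bookkeeping in the longer-cycle construction rather than any new idea.
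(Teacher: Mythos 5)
Your proposal is correct and follows essentially the same route as the paper, which proves this claim by invoking verbatim the longest-cycle rerouting argument of Claim~\ref{Claim1} (the cycle $u_i\overleftarrow{C}u_j^+u_i^+\overrightarrow{C}u_ju_j'Pu_i'u_i$), noting as you do that neither the connectivity bound nor $(P_2\cup kP_1)$-freeness enters. Your spelled-out derivation of $N_C(H)\cap N_C(H)^+=\emptyset$ (a vertex $w$ in the intersection would put the $C$-adjacent pair $w,w^+$ both in the independent set $N_C(H)^+$) is exactly the one-line deduction the paper leaves implicit.
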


%

For any component $H$ of $G-V(C)$, $N_C(H)\cap  N_C(H)^+=\emptyset$ by Claim \ref{c1}, and so $N_C(H)$ is a vertex cut of $G$.
As $G$ is $(2k-2)$-connected, $|N_C(H)^+|=|N_C(H)|\ge 2k-2$. Similarly as in
Claim \ref{Claim2}, we have

\begin{claim}\label{c2}
Every component of $G-V(C)$ is trivial.
\end{claim}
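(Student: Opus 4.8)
The plan is to imitate the proof of Claim \ref{Claim2} almost verbatim, substituting the weaker connectivity bound and invoking the arithmetic inequality $2k-2\ge k$, which holds precisely because $k\ge 3$. I would argue by contradiction: suppose some component $H$ of $G-V(C)$ is nontrivial, so that $H$ contains an edge $uv$.

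First I would collect the structural facts already in hand. By Claim \ref{c1}, $N_C(H)^+$ is an independent set of $G$ and $N_C(H)\cap N_C(H)^+=\emptyset$; the latter forces $N_C(H)$ to be a vertex cut, whence the $(2k-2)$-connectivity of $G$ yields $|N_C(H)^+|=|N_C(H)|\ge 2k-2$, exactly as recorded just before the claim. The one line that needs a separate justification is that no vertex of $H$ is adjacent to any vertex of $N_C(H)^+$: if some $w\in V(H)$ were adjacent to $z\in N_C(H)^+$, then $z\in N_C(w)\subseteq N_C(H)$, contradicting $N_C(H)\cap N_C(H)^+=\emptyset$. In particular $u$ and $v$ send no edges into $N_C(H)^+$.

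Combining these observations, the induced subgraph $G\left[N_C(H)^+\cup\{u,v\}\right]$ contains exactly one edge, namely $uv$: the set $N_C(H)^+$ is independent, $\{u,v\}$ contributes only the edge $uv$, and there are no edges between $\{u,v\}$ and $N_C(H)^+$. Since $|N_C(H)^+|\ge 2k-2$, this graph contains $P_2\cup(2k-2)P_1$ as an induced subgraph. For $k\ge 3$ we have $2k-2\ge k$, so $P_2\cup(2k-2)P_1$ contains $P_2\cup kP_1$ as an induced subgraph, and therefore so does $G$, contradicting the hypothesis that $G$ is $(P_2\cup kP_1)$-free.

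I do not expect a genuine obstacle in this claim. The only difference from the $2k$-connected setting of Claim \ref{Claim2} is that the slack in the vertex count shrinks from $2k$ to $2k-2$, and the inequality $2k-2\ge k$ is precisely what keeps the argument intact for $k\ge 3$. The real difficulty of Theorem \ref{li} should surface later, in the counting/toughness step (the analogue of Claim \ref{Claim3} and the final estimate of $c(G-S)$ against $|S|$), where the reduced connectivity leaves less room and the parity/independence bookkeeping presumably must be sharpened rather than merely transcribed from the proof of Theorem \ref{xu}.
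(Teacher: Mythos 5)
Your proposal is correct and matches the paper exactly: the paper proves this claim simply by noting $|N_C(H)^+|=|N_C(H)|\ge 2k-2$ and invoking the argument of Claim \ref{Claim2}, which is precisely your transcription with the bound $2k-2\ge k$ for $k\ge 3$. Your explicit verification that no vertex of $H$ is adjacent to $N_C(H)^+$ is a detail the paper leaves implicit, but it is the same argument.
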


Let $x\in V(G)\setminus V(C)$ and $N_C(x)=\{x_1,\dots,x_t\}$. By Claim \ref{c2}, $x$ is an isolated vertex of $G-V(C)$.  So
$t=|N_C(x)|\ge 2k-2$.
For convenience, let $x_{t+i}=x_i$ for $i=1,\dots,t$.
For $i=1,\dots,t$,
denote by $S_i$  the vertex set of the segment $x_i^+\overrightarrow{C}x_{i+1}^-$ of $C$ from $x_i^+$ to $x_{i+1}^-$. 
By Claim \ref{c1}, $x_i$ and $x_{i+1}$ are not consecutive vertices on $C$, so    $|S_i|\ge 1$

\begin{claim} \label{c3}
For $i=1,\dots,t$ and $j=1,\dots,|S_i|$, $N_C(x)^+\cap N_C(x_i^{+j})=\emptyset$ if $j$ is odd and
$\left|N_C(x)^+\setminus N_C(x_i^{+j}) \right|\le k-2$ if $j$ is even.
\end{claim}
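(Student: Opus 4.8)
The plan is to mirror the inductive argument of Claim~\ref{Claim3}, recalibrating every numerical threshold to the weaker hypothesis $\left|N_C(x)^+\right|=t\ge 2k-2$. As there, I would fix once and for all a subset $X\subseteq N_C(x)^+$ with $|X|=2k-2$ and $x_1^+\in X$ (such $X$ exists since $t\ge 2k-2$), take $i=1$ by relabelling, and prove the sharpened statement
\[
\left|N_C(x_1^{+j})\cap X\right|=0 \ \text{ if $j$ is odd}, \qquad \left|N_C(x_1^{+j})\cap X\right|\ge k \ \text{ if $j$ is even},
\]
for $j=1,\dots,|S_1|$ by induction on $j$, letting $X$ range over all admissible subsets only at the very end. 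The even bound is exactly equivalent to $\left|X\setminus N_C(x_1^{+j})\right|\le k-2$, which is the shape of the claim. Throughout I would use two structural facts freely: $N_C(x)^+$ is independent by Claim~\ref{c1}; and no interior vertex $x_1^{+j}$ with $1\le j\le|S_1|$ lies in $N_C(x)$, while $x_1^{+j}\in N_C(x)^+$ only for $j=1$. In particular $x$ is non-adjacent to every $x_1^{+j}$ and $x_1^{+j}\notin X$ for $j\ge 2$; these are what guarantee the ``exactly one edge'' property and the correct count of isolated vertices in the induced subgraphs below.

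For the base cases, $j=1$ is immediate from independence of $X$. For $j=2$ I would argue by contradiction: if $\left|N_C(x_1^{+2})\cap X\right|\le k-1$ then $\left|X\setminus N_C(x_1^{+2})\right|\ge k-1$, and $G\left[\left(X\setminus N_C(x_1^{+2})\right)\cup\{x,x_1^+,x_1^{+2}\}\right]$ has the single edge $x_1^+x_1^{+2}$, with $\left(X\setminus N_C(x_1^{+2})\right)\cup\{x\}$ as $\ge k$ isolated vertices, producing the forbidden $P_2\cup kP_1$. The inductive step splits by parity exactly as before. For even $j$, the hypothesis $N_C(x_1^{+(j-1)})\cap X=\emptyset$ isolates the edge $x_1^{+(j-1)}x_1^{+j}$ inside $G\left[\left(X\setminus N_C(x_1^{+j})\right)\cup\{x,x_1^{+j},x_1^{+(j-1)}\}\right]$, and $\left|N_C(x_1^{+j})\cap X\right|\le k-1$ again forces $P_2\cup kP_1$. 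For odd $j$, assuming some $x_r^+\in N_C(x_1^{+j})\cap X$, I would first rule out $\left|N_C(x_1^{+j})\cap X\right|\le k-1$ (which would produce $P_2\cup kP_1$ via the edge $x_1^{+j}x_r^+$), hence $\left|N_C(x_1^{+j})\cap X\right|\ge k$; combined with the inductive bound $\left|N_C(x_1^{+(j-1)})\cap X\right|\ge k$ and $|X|=2k-2$, inclusion--exclusion gives $\left|N_C(x_1^{+(j-1)})\cap N_C(x_1^{+j})\cap X\right|\ge 2$, and the two resulting common neighbours $x_p^+,x_q^+$ yield a cycle longer than $C$ by the very constructions displayed in Claim~\ref{Claim3}.

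To pass from a fixed $X$ to all of $N_C(x)^+$: for odd $j$, every $a\in N_C(x)^+$ lies in some admissible $X$, so vanishing of $N_C(x_1^{+j})\cap X$ for each of them forces $N_C(x)^+\cap N_C(x_1^{+j})=\emptyset$; for even $j$, were $\left|N_C(x)^+\setminus N_C(x_1^{+j})\right|\ge k-1$, I could choose $X$ to contain $k-1$ of these non-neighbours together with $x_1^+$, contradicting $\left|X\setminus N_C(x_1^{+j})\right|\le k-2$. The case of general $i$ is identical after relabelling.

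The step I expect to be delicate is the odd inductive step. Under the original $2k$-connectivity the intersection bound came out as $k+2+k+2-2k=4>2$, with comfortable slack; here it is exactly $k+k-(2k-2)=2$, so the argument is tight and depends on the even-case threshold being pinned precisely at $\ge k$. Obtaining exactly two common neighbours is all the longer-cycle constructions require, but there is no room to spare, so the bookkeeping on $|X|=2k-2$ and on which vertices may coincide (in particular verifying $x_1^{+j}\notin X$ for $j\ge 2$ so the isolated vertices are not miscounted) must be carried out carefully.
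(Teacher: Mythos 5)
Your proposal is correct and is essentially the paper's own argument: the same induction on $j$ with parity split, the same $P_2\cup kP_1$ extractions (via $\{x,x_i^+,x_i^{+2}\}$, $\{x,x_i^{+(j-1)},x_i^{+j}\}$, and $\{x,x_i^{+j},x_r^+\}$), the same inclusion--exclusion count $k+k-(2k-2)=2$ yielding two common neighbours, and the same longer-cycle rotations. The only difference is cosmetic: the paper works directly with $N_C(x)^+$ and the complement bound $\left|N_C(x)^+\setminus N_C(x_i^{+j})\right|\le k-2$ (handling the boundary case of the cycle construction by cyclic indexing $i+1\le p<q\le t+i$ rather than your two-case split inherited from Claim \ref{Claim3}), whereas you route the identical counts through a fixed $(2k-2)$-subset $X$ and recover the full statement by varying $X$ at the end.
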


\begin{proof}
We  prove Claim \ref{c3} by induction on $j$ for $j=1,\dots,|S_i|$.

By Claim \ref{c1}, $N_C(x)^+$ is an independent set, so $N_C(x)^+\cap N_C(x_i^{+})=\emptyset$, i.e., Claim \ref{c3} follows for $j=1$.

Suppose $\left| N_C(x)^+\setminus N_C(x_i^{+2})\right|\ge k-1$. Let $V_i=( N_C(x)^+\setminus N_C(x_i^{+2})) \cup \{x,x_i^+,x_i^{+2}\}$. Then
$G[V_i]$ contains exactly one edge $x_i^+x_i^{+2}$, so it contains $P_2\cup kP_1$ as an induced subgraph, a contradiction.
Thus  $\left|  N_C(x)^+\setminus N_C(x_i^{+2})\right|\le k-2$, i.e.,  Claim \ref{c3} follows for $j=2$.

Suppose that $j$ be an integer with $3\le j\le |S_i|$ and that Claim \ref{c3} holds for $j-1$.

Suppose that $j$ is odd.
By induction assumption,
\[
\left| N_C(x)^+\setminus N_C\left(x_i^{+(j-1)}\right) \right|\le k-2.
\]
We want to show that $N_C(x)^+\cap N_C(x_i^{+j})= \emptyset$.
Suppose that $N_C(x)^+\cap N_C(x_i^{+j})\ne \emptyset$,
say $z\in N_C(x)^+\cap N_C(x_i^{+j})$.
If $\left|N_C(x)^+\setminus N_C(x_i^{+j})\right|\ge k-1$, then $G\left[(N_C(x)^+\setminus N_C(x_i^{+j}))\cup \{x,x_i^{+j},z\} \right]$
contains exactly one edge $x_i^{+j}z$ and so it contains $P_2\cup kP_1$ as an induced subgraph,  a contradiction.
Thus
\[
\left|N_C(x)^+\setminus N_C(x_i^{+j})\right|\le k-2,
\]
which implies that
\begin{align*}
&\quad \left|N_C(x_i^{+j})\cap N_C\left(x_i^{+(j-1)}\right)\cap N_C(x)^+\right|\\
&\ge
\left|N_C(x_i^{+j})\cap N_C(x)^+\right|+\left| N_C\left(x_i^{+(j-1)}\right)\cap N_C(x)^+\right|-\left| N_C(x)^+\right|\\
&\ge t-(k-2)+t-(k-2)-t\ge 2.
\end{align*}
So we may assume that $x_p^+,x_q^+\in N_C(x_i^{+j})\cap N_C\left(x_i^{+(j-1)}\right)\cap N_C(x)^+$ with $i+1\le p<q\le t+i$. 
Then
\[
x_i^{+(j-1)}x_p^+\overrightarrow{C}x_qxx_p\overleftarrow{C}x_i^{+j}x_q^{+}\overrightarrow{C}x_i^{+(j-1)}
\]
is a cycle of $G$ longer than $C$, a contradiction.
Therefore, $N_C(x)^+\cap N_C(x_i^{+j})=\emptyset$, proving  Claim \ref{c3} for odd $j$.

Suppose that $j$ is even.
By induction assumption, $N_C(x)^+\cap N_C\left(x_i^{+(j-1)}\right)=\emptyset$.
If $\left|N_C(x)^+\setminus N_C(x_i^{+j}) \right|\ge k-1$, then $G\left[(N_C(x)^+\setminus N_C(x_i^{+j}))\cup \{x,x_i^{+(j-1)},x_i^{+j} \}\right]$ contains exactly one edge $x_i^{+(j-1)}x_i^{+j}$, so it contains $P_2\cup kP_1$ as an induced subgraph, a contradiction.
So $\left|N_C(x)^+\setminus N_C(x_i^{+j}) \right|\le k-2$, proving Claim \ref{c3} for even $j$.
\end{proof}

For $i=1,\dots, t$, let $S_i'=\left\{x_i^{+j}\in S_i: j\equiv 1~(\bmod~2)\right\}$ and $S':=\cup_{i=1}^tS_i'$.

\begin{claim}\label{in}
$S'$ is an independent set.
\end{claim}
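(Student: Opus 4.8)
The plan is to argue by contradiction, in exactly the spirit of the corresponding step in the proof of Theorem~\ref{xu}: if $S'$ fails to be independent, then $G[S']$ contains an edge $uv$, and I aim to enlarge $\{u,v\}$ by vertices of $N_C(x)^+$ into an induced copy of $P_2\cup kP_1$, which is forbidden. The whole argument rests on the structural information already collected in Claims~\ref{c1} and~\ref{c3}, so essentially no new geometry of the cycle $C$ is needed here.

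First I would record that every vertex of $S'$ sits at an \emph{odd} offset along its segment, that is, $u=x_i^{+j}$ with $j$ odd. Hence the odd case of Claim~\ref{c3} applies to both endpoints of the putative edge $uv$, giving $N_C(u)\cap N_C(x)^+=\emptyset$ and $N_C(v)\cap N_C(x)^+=\emptyset$. A short but essential observation is that neither $u$ nor $v$ can itself lie in $N_C(x)^+$: if, say, $u\in N_C(x)^+$, then (as $uv$ is an edge and both endpoints lie on $C$) the vertex $u$ would belong to $N_C(v)\cap N_C(x)^+$, contradicting the displayed emptiness for $v$. Therefore $u,v\notin N_C(x)^+$.

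Next I would examine the induced subgraph $G[N_C(x)^+\cup\{u,v\}]$. By Claim~\ref{c1} the set $N_C(x)^+$ is independent, and by the previous paragraph $u$ and $v$ have no neighbours inside $N_C(x)^+$; since also $u,v\notin N_C(x)^+$, the only edge of this induced subgraph is $uv$. Finally I would count: $|N_C(x)^+|=t\ge 2k-2$, and because $k\ge 3$ we have $2k-2\ge k$, so $N_C(x)^+$ supplies at least $k$ pairwise non-adjacent vertices, all disjoint from $\{u,v\}$. Together with the edge $uv$ these yield an induced $P_2\cup kP_1$, contradicting that $G$ is $(P_2\cup kP_1)$-free; hence $S'$ is independent.

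The only place where I expect to have to be careful is the vertex count at the end. Here the connectivity is weakened to $2k-2$ rather than the $2k$ available in Theorem~\ref{xu}, so I must confirm that $N_C(x)^+$ still provides the required $k$ isolated vertices. This is precisely where the hypothesis $k\ge 3$ enters, and it is the point at which the weaker connectivity assumption could in principle break the argument; it does not, however, since $2k-2\ge k$ holds throughout this range.
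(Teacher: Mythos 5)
Your proposal is correct and follows essentially the same route as the paper's own proof: take an edge $uv$ in $G[S']$, apply the odd case of Claim~\ref{c3} to both endpoints together with the independence of $N_C(x)^+$ from Claim~\ref{c1}, and extract an induced $P_2\cup kP_1$ from $G\left[N_C(x)^+\cup\{u,v\}\right]$ using $|N_C(x)^+|\ge 2k-2\ge k$ for $k\ge 3$. Your explicit verification that $u,v\notin N_C(x)^+$ (via $u\in N_C(v)\cap N_C(x)^+$ otherwise) is a small point the paper leaves implicit, and it is handled correctly.
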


\begin{proof}

Suppose that  $S'$ is not independent.
Then there is an edge $uv$ in $G[S']$.  By Claim \ref{c3},
 $N_C(x)^+\cap N_C(u)=\emptyset$ and
 $N_C(x)^+\cap N_C(v)=\emptyset$.  By Claim \ref{c1}, $N_C(x)^+$ is an independent set. It thus follows that
$uv$ is the unique edge in $G\left[N_C(x)^+\cup\{u,v\}\right]$. As $k\ge 3$, $|N_C(x)^+|\ge 2k-2\ge k+1$.
So $G\left[N_C(x)^+\cup\{u,v\}\right]$ contains $P_2\cup kP_1$ as an induced subgraph, a contradiction.
\end{proof}

\begin{claim}\label{k-2}
For any $z\in V(C)\setminus (N_C(x)\cup S')$, $\left|S'\setminus N_C(z)\right|\le k-2$.
\end{claim}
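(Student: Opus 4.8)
The plan is to assume the contrary, namely $\lvert S'\setminus N_C(z)\rvert \ge k-1$, and to manufacture an induced $P_2\cup kP_1$, contradicting that $G$ is $(P_2\cup kP_1)$-free. The whole point is to locate a single edge incident to a vertex of $S'$ that can play the role of the $P_2$, and then to surround it by $k$ pairwise non-adjacent vertices that are also non-adjacent to both endpoints of that edge.

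First I would pin down an edge from $z$ into $S'$. Since $z\in V(C)\setminus(N_C(x)\cup S')$, it lies strictly inside one of the segments, say $z=x_i^{+j}$, and the only way for $z$ to miss both $N_C(x)$ (so $j\ge 1$) and $S'$ (the odd offsets) is that $j$ is even, hence $j\ge 2$. Its immediate predecessor $z^{-}=x_i^{+(j-1)}$ then has odd offset $j-1\ge 1$, so $z^{-}\in S_i'\subseteq S'$, and $z^{-}$ is adjacent to $z$ because they are consecutive on $C$. This edge $zz^{-}$ is the candidate $P_2$; note in particular $z^{-}\in N_C(z)$, so $z^{-}\notin S'\setminus N_C(z)$.

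Next I would build the $k$ isolated vertices. Choose $W\subseteq S'\setminus N_C(z)$ with $\lvert W\rvert=k-1$ and form $T=\{z,z^{-}\}\cup W\cup\{x\}$; the extra vertex $x$ supplies the $k$-th isolated vertex, which is exactly why the bound comes out as $k-2$ rather than $k-1$. To check $G[T]\cong P_2\cup kP_1$ I would verify that $z z^{-}$ is the only edge: the set $W\cup\{z^{-}\}\subseteq S'$ spans no edge by Claim \ref{in}; the off-cycle vertex $x$ is non-adjacent to every vertex of $S'$ (its neighbours on $C$ are exactly $N_C(x)$, which is disjoint from $S'$ by the definition of the segments) and is non-adjacent to $z$ (as $z\notin N_C(x)$); and $z$ is non-adjacent to every vertex of $W$ by the choice $W\subseteq S'\setminus N_C(z)$. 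Together with the disjointness checks ($x\notin V(C)$, $z\notin S'$, $z^{-}\notin W$) this gives $\lvert T\rvert=k+2$ with a unique edge, so $G$ contains $P_2\cup kP_1$ as an induced subgraph, the desired contradiction.

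The step I expect to require the most care is the parity/structural identification that $z^{-}\in S'$: one must be sure that excluding $z$ from both $N_C(x)$ and $S'$ forces $z$ to an even offset within its segment, so that its predecessor lands at an odd offset and hence in $S'$, and that this predecessor is genuinely a neighbour of $z$ not already counted among the non-neighbours $W$. Everything else is the bookkeeping that the $k+2$ chosen vertices are distinct and that the independence of $S'$ together with the non-adjacency of $x$ leaves $zz^{-}$ as the sole edge.
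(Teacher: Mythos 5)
Your proposal is correct and follows essentially the same route as the paper, which also assumes $\left|S'\setminus N_C(z)\right|\ge k-1$, observes that $z^-\in S'$ (so that $(S'\setminus N_C(z))\cup\{z^-\}$ is independent by Claim \ref{in}), and exhibits the induced subgraph $G\left[(S'\setminus N_C(z))\cup\{x,z,z^-\}\right]$ with unique edge $zz^-$ to obtain an induced $P_2\cup kP_1$. The parity argument you flag as delicate --- that $z\notin N_C(x)\cup S'$ forces $z$ to an even offset $j\ge 2$, so $z^-=x_i^{+(j-1)}\in S'$ and $z^-\in N_C(z)$ --- is exactly the step the paper states tersely as ``$z\notin N_C(x)$ and $z^-\in S'$,'' so your write-up just makes the paper's implicit bookkeeping explicit.
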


\begin{proof}
Suppose that  $\left|S'\setminus N_C(z)\right|\ge k-1$ for some $z\in V(C)\setminus (N_C(x)\cup S')$.
As $z\notin N_C(x)$ and $z^-\in S'$, we have by Claim \ref{in} that $S'\setminus N_C(z)\cup \{z^-\}$ is an independent set. Then $G\left[ (S'\setminus N_C(z))\cup \{x,z,z^-\}\right]$ contains exactly one edge $zz^-$ and so it contains $P_2\cup kP_1$ as an induced subgraph, a contradiction.
Therefore $\left|S'\setminus N_C(z)\right|\le k-2$.
\end{proof}

\begin{claim}\label{xi}
For any $i=1,\dots,t$, $\left|N_C(x)^+\setminus N_C(x_i)\right|\le k-1$.
\end{claim}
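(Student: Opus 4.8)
The plan is to argue by contradiction, manufacturing a forbidden induced $P_2\cup kP_1$ out of the single edge $xx_i$ together with a large independent set that lives inside $N_C(x)^+$. The whole point is that $N_C(x)^+$ is already known to be independent and disjoint from $N_C(x)$ (Claim \ref{c1}), so restricting to the non-neighbours of $x_i$ keeps all the structure we need.

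Fix $i\in\{1,\dots,t\}$ and set $W=N_C(x)^+\setminus N_C(x_i)$. First I would record three facts about $W$. Since $W\subseteq N_C(x)^+$, Claim \ref{c1} gives that $W$ is independent. Since $N_C(x)\cap N_C(x)^+=\emptyset$ (again Claim \ref{c1}), no vertex of $W$ lies in $N_C(x)$, so $x$ is adjacent to no vertex of $W$. Finally, by the very definition of $W$, the vertex $x_i$ is adjacent to no vertex of $W$. Because $x_i\in N_C(x)$, we also have the edge $xx_i\in E(G)$. Now suppose for contradiction that $|W|\ge k$, and choose any $k$-element subset $W'\subseteq W$. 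Looking at $G[\{x,x_i\}\cup W']$, the only possible edge is $xx_i$: the set $W'$ is independent, and neither $x$ nor $x_i$ has a neighbour in $W'$. Hence this induced subgraph is exactly $P_2\cup kP_1$ (with $xx_i$ the $P_2$ and the $k$ vertices of $W'$ the $kP_1$), contradicting the hypothesis that $G$ is $(P_2\cup kP_1)$-free. Therefore $|W|\le k-1$, which is the assertion.

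There is essentially no obstacle here beyond keeping track of which vertex plays which role; the computation is immediate once the three facts above are in place. The only subtlety worth flagging is why the bound is $k-1$ rather than the sharper $k-2$ appearing in Claim \ref{c3}. In Claim \ref{c3} the vertex $x$ could be thrown in as an \emph{extra} isolated vertex alongside the $P_2$, so one needed only $k-1$ further non-neighbours to reach the forbidden configuration. Here, however, $x$ is consumed as an endpoint of the $P_2$ (the edge $xx_i$), so it can no longer serve as one of the $k$ isolated vertices; this costs exactly one unit and leaves the weaker bound $\left|N_C(x)^+\setminus N_C(x_i)\right|\le k-1$, which is precisely what Claim \ref{xi} claims.
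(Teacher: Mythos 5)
Your proof is correct and follows essentially the same route as the paper: both arguments take the independent set $N_C(x)^+\setminus N_C(x_i)$ (independent and avoiding $N_G(x)$ by Claim \ref{c1}), assume it has at least $k$ vertices, and attach a single edge to produce an induced $P_2\cup kP_1$. The only difference is your choice of the $P_2$: you use $xx_i$, while the paper uses the cycle edge $x_ix_i^+$ (noting $x_i^+\in N_C(x_i)$ is excluded from the set); this choice is immaterial, and your closing remark correctly explains why the bound is $k-1$ rather than the $k-2$ of Claim \ref{c3}.
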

\begin{proof}
If $\left|N_C(x)^+\setminus N_C(x_i)\right|\ge k$ for some $i=1,\dots,t$,
then by Claim \ref{c1}, $G\left[ (N_C(x)^+\setminus N_C(x_i))\cup \{x_i,x_i^+\}\right]$ contains exactly one edge $x_ix_i^+$ and so it contains $P_2\cup kP_1$ as an induced subgraph, a contradiction.
\end{proof}

\begin{claim}\label{even}
If $|S_i|$ is even for some $i=1,\dots,t$, then
\begin{enumerate}
\item[(i)] $t=2k-2$, \\
\item[(ii)] $x_{i+1}^+,\dots,x_{i+k-2}^+\notin N_C\left(x_i^{+|S_i|}\right)$,\\
\item[(iii)]  $x_{i+k}^+,\dots,x_{i+t}^+\notin N_C(x_{i+1})$.
 \end{enumerate}
\end{claim}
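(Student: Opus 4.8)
The plan is to play the two consecutive vertices $x_i^{+|S_i|}=x_{i+1}^-$ and $x_{i+1}$ against each other. Since $|S_i|$ is even, the superscript $|S_i|$ is even, so Claim~\ref{c3} gives $\bigl|N_C(x)^+\setminus N_C(x_{i+1}^-)\bigr|\le k-2$, while Claim~\ref{xi} gives $\bigl|N_C(x)^+\setminus N_C(x_{i+1})\bigr|\le k-1$. Recording these as index sets
\[
A=\bigl\{a: x_{i+1}x_a^+\in E(G)\bigr\},\qquad B=\bigl\{b: x_{i+1}^-x_b^+\in E(G)\bigr\}
\]
inside $\{i+1,\dots,i+t\}$ (recall $x_{i+t}=x_i$), the two bounds read $|A|\ge t-(k-1)$ and $|B|\ge t-(k-2)$.

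The heart of the argument is a rerouting lemma: there are no indices $a\in B$ and $b\in A$ with $a<b$. Suppose such $a,b$ existed. Then $x_{i+1}^-x_a^+,x_{i+1}x_b^+\in E(G)$, and along $\overrightarrow{C}$ the vertices appear in the order $x_{i+1}^-,x_{i+1},\dots,x_a,x_a^+,\dots,x_b,x_b^+$. As $x_a,x_b\in N_C(x)$, I would delete the three edges $x_{i+1}^-x_{i+1}$, $x_ax_a^+$, $x_bx_b^+$ and splice the three resulting segments back together through $x$ and the two chords, producing
\[
x_{i+1}\overrightarrow{C}x_a\,x\,x_b\overleftarrow{C}x_a^+\,x_{i+1}^-\overleftarrow{C}x_b^+\,x_{i+1},
\]
a cycle on $V(C)\cup\{x\}$ longer than $C$, a contradiction. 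Hence every index of $B$ is at least every index of $A$, that is, $\max A\le\min B$, and in particular $|A\cap B|\le 1$.

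With the lemma in hand the three conclusions follow by counting. From $\max A\le\min B$ and $A\cup B\subseteq\{i+1,\dots,i+t\}$ I get $t\ge|A\cup B|=|A|+|B|-|A\cap B|\ge (t-k+1)+(t-k+2)-1=2t-2k+2$, so $t\le 2k-2$; together with $t\ge 2k-2$ this yields $t=2k-2$, which is (i). Moreover every inequality now holds with equality: $|A|=k-1$, $|B|=k$, $|A\cap B|=1$, and $A\cup B=\{i+1,\dots,i+t\}$. Since no index below $\max A$ lies in $B$, each such index must lie in $A$, forcing $A=\{i+1,\dots,i+k-1\}$ and symmetrically $B=\{i+k-1,\dots,i+t\}$. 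Taking complements gives $x_{i+k}^+,\dots,x_{i+t}^+\notin N_C(x_{i+1})$, which is (iii), and $x_{i+1}^+,\dots,x_{i+k-2}^+\notin N_C(x_i^{+|S_i|})$, which is (ii).

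The step I expect to be the main obstacle is the rerouting lemma, specifically checking that the displayed closed walk is a genuine cycle through every vertex of $V(C)\cup\{x\}$ for all admissible $a,b$. The three arcs $x_{i+1}\overrightarrow{C}x_a$, $x_a^+\overrightarrow{C}x_b$, $x_b^+\overrightarrow{C}x_{i+1}^-$ do partition $V(C)$, and each of the four joining edges is present by hypothesis together with $x_a,x_b\in N_C(x)$; but the degenerate positions need separate attention, namely $a=i+1$ (when $x_{i+1}\overrightarrow{C}x_a$ collapses to a single vertex) and $b=i+t$ (when $x_b=x_i$ and $x_b^+\in S_i$ precedes $x_{i+1}^-$). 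In each of these the same splice still closes up, using only that distinct $x_j$ are non-consecutive on $C$ (Claim~\ref{c1}), so no genuinely new idea is required beyond careful bookkeeping of the orientation.
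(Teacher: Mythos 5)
Your proposal is correct and is essentially the paper's own argument: the paper takes the smallest index $p$ with $x_p^+\in N_C\left(x_i^{+|S_i|}\right)$ and the largest $q$ with $x_q^+\in N_C(x_{i+1})$, rules out $p<q$ with exactly the same spliced cycle through $x$ (your rerouting lemma, i.e., $\max A\le\min B$), and then runs the same inclusion--exclusion count using Claims~\ref{c3} and~\ref{xi} to force $t=2k-2$, $|A|=k-1$, $|B|=k$, $|A\cap B|=1$ and hence $p=q=i+k-1$, which gives (ii) and (iii). Your reformulation via the index sets $A,B$ and the slightly different order of deductions (deriving $t\le 2k-2$ from $|A\cup B|\le t$ first) is only a cosmetic repackaging of the same proof.
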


\begin{proof}
Let $x_p^+\in N_C\left(x_i^{+|S_i|}\right)$ such that $i+1\le p\le t+i$ and $p$ is as small as possible.
Let $x_q^+\in N_C(x_{i+1})$ such that $i+1\le q\le t+i$ and $q$ is as large as possible.
If $p<q$, then
\[
x_i^{+|S_i|}x_p^+\overrightarrow{C}x_qxx_p\overleftarrow{C}x_{i+1}x_q^+\overrightarrow{C}x_i^{+|S_i|}
\]
is a cycle of $G$ longer than $C$, a contradiction.
So $p\ge q$.

As $|S_i|$ is even, we have by Claim \ref{c3} that $\left|N_C(x)^+\setminus N_C(x_i^{+|S_i|}) \right|\le k-2$.
By Claim \ref{xi}, $\left|N_C(x)^+\setminus N_C(x_{i+1})\right|\le k-1$. Hence
\begin{align*}
&\quad \left|N_C(x_i^{+|S_i|})\cap N_C(x_{i+1})\cap N_C(x)^+\right|\\
&\ge
\left|N_C(x_i^{+|S_i|})\cap N_C(x)^+\right|+\left| N_C(x_{i+1})\cap N_C(x)^+\right|-\left| N_C(x)^+\right|\\
&\ge t-(k-2)+t-(k-1)-t\\
&\ge 2k-2-(2k-3)= 1.
\end{align*}
Suppose that $\left|N_C(x_i^{+|S_i|})\cap N_C(x_{i+1})\cap N_C(x)^+\right|\ge 2$. Then there exist  $x_\ell^+,x_r^+\in N_C(x_i^{+|S_i|})\cap N_C(x_{i+1})$ with $i+1\le \ell<r\le t+i$. By the definition of $p$ and $q$, we have $p\le \ell<r\le q$, contradicting to the fact that $p\ge q$.
Therefore, we have $\left|N_C(x_i^{+|S_i|})\cap N_C(x_{i+1})\cap N_C(x)^+\right|=1$, implying that
 $t=2k-2$, $\left|N_C(x)^+\cap N_C(x_i^{+|S_i|}) \right|= k$ and  $\left|N_C(x)^+\cap N_C(x_{i+1})\right|=k-1$. Therefore, by the definition of $p$ and $q$ again and the fact that
$p\ge q$, we have $p=q=i+k-1$, $x_{i+1}^+,\dots,x_{i+k-2}^+\notin N_C\left(x_i^{+|S_i|}\right)$ and $x_{i+k}^+,\dots,x_{i+t}^+\notin N_C(x_{i+1})$.
\end{proof}

\begin{claim}\label{evenn}
If $|S_i|$ is even for some $i=1,\dots,t$, then $|S_j|$ is even for $j=i+k-1,\dots,i+2k-2$.
\end{claim}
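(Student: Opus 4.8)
The plan is to establish the evenness of each $S_j$ in the stated range by \emph{reducing it to an adjacency condition} and then re-running the longest-cycle machinery behind Claim~\ref{even}. By Claim~\ref{c3}, the final vertex $x_{j+1}^{-}=x_j^{+|S_j|}$ of the segment $S_j$ sits at an odd offset precisely when $|S_j|$ is odd, in which case $N_C(x)^+\cap N_C(x_{j+1}^{-})=\emptyset$; equivalently, $|S_j|$ is even if and only if $x_{j+1}^{-}$ has a neighbour in $N_C(x)^+$. Thus it suffices to assume some $|S_j|$ with $j$ in the range is odd and to derive a contradiction, and the whole claim follows by treating the indices $j=i+2k-2,\,i+2k-3,\dots,i+k-1$ in turn (the value $j=i+2k-2\equiv i$ being the given even base case).

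First I would record, from Claim~\ref{even} and the extremal equalities in its proof, the exact neighbourhoods it pins down. Since $|S_i|$ is even we have $t=2k-2$ and $p=q=i+k-1$, which upgrade Claim~\ref{even}(ii)--(iii) to $N_C(x)^+\cap N_C(x_{i+1}^{-})=\{x_{i+k-1}^{+},\dots,x_{i+2k-2}^{+}\}$ (exactly $k$ vertices) and $N_C(x)^+\cap N_C(x_{i+1})=\{x_{i+1}^{+},\dots,x_{i+k-1}^{+}\}$ (exactly $k-1$ vertices); in particular, using $x_{i+2k-2}^{+}=x_i^{+}$, the two ends of the even segment satisfy $x_i^{+}\in N_C(x_{i+1}^{-})$. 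The rich anchor $x_{i+1}^{-}$, together with its explicit forward arc $\{x_{i+k-1}^{+},\dots,x_{i+2k-2}^{+}\}$, is the object I would carry through the argument, and it is exactly the reach of this arc that will confine the conclusion to the indices $i+k-1,\dots,i+2k-2$.

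Assume now $|S_j|$ is odd for some $j$ in the range. Then its last even-offset vertex $x_{j+1}^{-2}=x_j^{+(|S_j|-1)}$ satisfies, by Claim~\ref{c3}, $|N_C(x)^+\setminus N_C(x_{j+1}^{-2})|\le k-2$, so $|N_C(x)^+\cap N_C(x_{j+1}^{-2})|\ge k$. Pairing this with the anchor $x_{i+1}^{-}$ (which has exactly $k$ neighbours in $N_C(x)^+$) gives, by inclusion--exclusion, $|N_C(x_{i+1}^{-})\cap N_C(x_{j+1}^{-2})\cap N_C(x)^+|\ge k+k-(2k-2)=2$, so these two interior vertices share at least two neighbours $x_p^{+},x_q^{+}$ in the arc. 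I would then use this crossing pair, the cut vertices $x_{i+1},x_{j+1}\in N_C(x)$, and the vertex $x$ to re-route $C$ into a cycle through all of $V(C)\cup\{x\}$, exactly in the style of the rotations in Claim~\ref{even}; the position of $x_{j+1}$ relative to the arc $\{x_{i+k-1}^{+},\dots,x_{i+2k-2}^{+}\}$ is what makes the two chords cross and the insertion of $x$ strictly lengthen $C$, contradicting maximality. Since $S_{i+k-1}$ even then lets us re-apply the whole statement with $i$ replaced by $i+k-1$, two applications cover all $2k-2$ segments, yielding that every $S_j$ is even.

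The main obstacle, and the step I expect to absorb most of the work, is precisely this positional/rotation analysis: the inclusion--exclusion only asserts that common neighbours exist, and one must locate $x_p^{+},x_q^{+}$ inside the arc and verify the crossing order relative to $x_{i+1},x_{j+1}$ under the wrap-around convention $x_{t+i}=x_i$ (with $t=2k-2$, so the arcs of $x_{i+1}^{-}$ and $x_{i+1}$ meet only at $x_{i+k-1}^{+}$) in order to exhibit a genuinely longer cycle, just as determining $p=q=i+k-1$ was the heart of Claim~\ref{even}. A minor separate check is the degenerate case $|S_j|=1$, where $x_{j+1}^{-2}=x_j$ is a cut vertex rather than an interior vertex of $S_j$; there I would instead pair $x_{i+1}^{-}$ with $x_j$ (which has at least $k-1$ neighbours in $N_C(x)^+$ by Claim~\ref{xi}) and argue analogously.
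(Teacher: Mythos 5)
Your proposal takes a genuinely different route from the paper, and its central step has a real gap. The paper's proof is a one-chord argument: if $|S_j|$ were odd then $w:=x_j^{+|S_j|}\in S'$, and applying Claim~\ref{k-2} to $z=x_i^{+|S_i|}$ (legitimate since $|S_i|$ even puts $x_i^{+|S_i|}\in V(C)\setminus(N_C(x)\cup S')$) gives $\left|S'\setminus N_C\left(x_i^{+|S_i|}\right)\right|\le k-2$; Claim~\ref{even}(ii) already names $k-2$ non-neighbours $x_{i+1}^+,\dots,x_{i+k-2}^+$, so every other vertex of $S'$ is adjacent to $x_i^{+|S_i|}$ --- in particular $w$, and this is exactly where the range $i+k-1\le j\le i+2k-2$ enters (it guarantees $w\notin\{x_{i+1}^+,\dots,x_{i+k-2}^+\}$, not, as you say, the ``reach'' of the arc of common neighbours). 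The single chord $x_i^{+|S_i|}w$ then yields the longer cycle $x_i^{+|S_i|}x_j^{+|S_j|}\overleftarrow{C}x_{i+1}xx_{j+1}\overrightarrow{C}x_i^{+|S_i|}$. You never invoke Claim~\ref{k-2}, and that is precisely the tool your sketch lacks: all chord-based information you use lives in $N_C(x)^+$, whereas the paper needs a chord landing on $w$ itself, which Claim~\ref{c3} says has \emph{no} neighbour in $N_C(x)^+$ when $|S_j|$ is odd.

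The gap is your deferred ``rotation''. Every rotation in the paper (in the proofs of Claims~\ref{c3} and \ref{even}) starts from two chord-sources that are \emph{consecutive} on $C$; cutting the $C$-edge between them frees the two half-edges that the crossing chords and the insertion of $x$ reconnect. Your anchors $u=x_{i+1}^-$ and $v=x_{j+1}^{-2}$ are far apart on $C$, so there is no such edge to cut, and the chordless vertex $w=x_{j+1}^-$ sitting between $v$ and $x_{j+1}$ forces any candidate spanning cycle of $V(C)\cup\{x\}$ to traverse $v,w,x_{j+1}$ consecutively, consuming one of $v$'s two cycle-edges. If one tests small configurations (e.g.\ $k=3$ with both common neighbours $x_p^+,x_q^+$ on the arc $x_{j+1}\overrightarrow{C}u$), the natural reroutings through the common neighbours strand the arc $x_{i+1}\overrightarrow{C}v^-$, and when a longer cycle does exist it can require chords of $u$ or $v$ that are \emph{not} common neighbours; so ``exactly in the style of the rotations in Claim~\ref{even}'' does not go through, and you acknowledge not having carried it out. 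Separately, your degenerate case $|S_j|=1$ is quantitatively broken: pairing $u$ (exactly $k$ neighbours in $N_C(x)^+$) with $x_j$ (at least $k-1$, by Claim~\ref{xi}) gives by inclusion--exclusion only $k+(k-1)-(2k-2)=1$ common neighbour, so the crossing pair you need may not exist and ``argue analogously'' cannot be executed; the paper's proof handles $|S_j|=1$ with no extra work since then $w=x_j^+\in S'$ all the same. Your preliminary observations are fine --- the reformulation of parity via Claim~\ref{c3} and the exact sets $N_C(x)^+\cap N_C(x_{i+1}^-)=\{x_{i+k-1}^+,\dots,x_{i+2k-2}^+\}$ and $N_C(x)^+\cap N_C(x_{i+1})=\{x_{i+1}^+,\dots,x_{i+k-1}^+\}$ do follow from the equalities inside the proof of Claim~\ref{even} --- but they do not repair the missing routing argument.
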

\begin{proof}
Suppose that $|S_j|$ is odd for some $j=i+k-1,\dots,i+2k-2$. Then $x_j^{+|S_j|}\in S'$.
By Claim \ref{k-2}, $\left|S'\setminus N_C\left(x_i^{+|S_i|}\right)\right|\le k-2$. As $x_{i+1}^+,\dots,x_{i+k-2}^+\notin N_C\left(x_i^{+|S_i|}\right)$ by Claim \ref{even}, we have $S'\setminus \{x_{i+1}^+,\dots,x_{i+k-2}^+\}\subseteq N_C\left(x_i^{+|S_i|}\right)$ and hence $x_i^{+|S_i|}x_j^{+|S_j|}\in E(G)$.
So
\[
x_i^{+|S_i|}x_j^{+|S_j|}\overleftarrow{C}x_{i+1}xx_{j+1}
\overrightarrow{C}x_i^{+|S_i|}
\]
is a cycle of $G$ longer than $C$, a contradiction.
Therefore, $|S_j|$ is even for all $j=i+k-1,\dots,i+2k-2$.
\end{proof}

\begin{claim}\label{odd}
For $i=1,\dots,t$, $|S_i|$ is odd.
\end{claim}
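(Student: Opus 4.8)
The plan is to argue by contradiction: suppose some $|S_i|$ is even. By Claim \ref{even}(i) this forces $t=2k-2$, so from now on all indices are read modulo $t$. The first step is to propagate evenness to every segment. Applying Claim \ref{evenn} to the index $i$ shows that $|S_j|$ is even for $j\in\{i+k-1,i+k,\dots,i+2k-2\}$; in particular $|S_{i+k-1}|$ is even, and applying Claim \ref{evenn} once more to $i+k-1$ yields evenness for $j\in\{i,i+1,\dots,i+k-1\}$. Since $t=2k-2$, the union of these two index sets is all of $\{1,\dots,t\}$, so \emph{every} $|S_j|$ is even.

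Next I would extract the exact adjacencies that the extremal configuration forces. Write $z_j=x_j^{+|S_j|}=x_{j+1}^-$ for the last vertex of $S_j$; since $|S_j|$ is even, $z_j\notin N_C(x)\cup S'$. Combining Claim \ref{k-2} (which gives $|S'\setminus N_C(z_j)|\le k-2$) with Claim \ref{even}(ii) (which exhibits the $k-2$ vertices $x_{j+1}^+,\dots,x_{j+k-2}^+$ of $S'$ missing from $N_C(z_j)$) forces equality: $N_C(z_j)\cap S'=S'\setminus\{x_{j+1}^+,\dots,x_{j+k-2}^+\}$. Likewise, reading the count in Claim \ref{even} (with $p=q$) at the appropriate shifted indices pins down $N_C(x_j)\cap N_C(x)^+$ as a block of $k-1$ consecutive vertices $\{x_j^+,\dots,x_{j+k-2}^+\}$. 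Evaluating these two facts at the right indices, and using $k\ge 3$, I would single out the two chords $x_{i-1}x_i^+\in E(G)$ (from the adjacency of $x_{i-1}$, i.e.\ the shift in which $|S_{i-2}|$ is even, where $x_i^+\in\{x_{i-1}^+,\dots,x_{i+k-3}^+\}$ precisely because $k\ge 3$) and $x_{i+1}^-x_{i-1}^+\in E(G)$ (from the adjacency of $z_i$, since $x_{i-1}^+$ is not among the excluded vertices $x_{i+1}^+,\dots,x_{i+k-2}^+$).

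Finally I would build a cycle longer than $C$, contradicting the choice of $C$. Using $xx_i,\,xx_{i+1}\in E(G)$ together with the two chords above, the closed walk
\[
x_i\, x\, x_{i+1}\overrightarrow{C}x_{i-1}\, x_i^{+}\overrightarrow{C}x_{i+1}^{-}\, x_{i-1}^{+}\overrightarrow{C}x_i^{-}\, x_i
\]
visits $x$ together with every vertex of $C$: the forward arc $x_{i+1}\overrightarrow{C}x_{i-1}$ covers all vertices except $x_i$ and the two segments $S_i,S_{i-1}$, which are reinserted by the arcs $x_i^{+}\overrightarrow{C}x_{i+1}^{-}$ and $x_{i-1}^{+}\overrightarrow{C}x_i^{-}$. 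Hence it is a cycle of length $|V(C)|+1$, a contradiction, so no $|S_i|$ can be even. I expect the last step to be the main obstacle: the extremal analysis blocks every ``obvious'' rerouting (Claim \ref{even} already shows the natural crossing-chord swaps fail, since there one is forced into $p\ge q$), so the difficulty is to find the single rerouting that survives — detaching $x$ into the gap between $x_i$ and $x_{i+1}$ while relocating the orphaned segment $S_i$ across the chords $x_{i-1}x_i^+$ and $x_{i+1}^-x_{i-1}^+$ — and to check it uses only edges guaranteed by the forced adjacencies.
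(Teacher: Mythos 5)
Your proposal is correct and follows essentially the same route as the paper: the identical double application of Claim \ref{evenn} to make every $|S_j|$ even with $t=2k-2$, followed by extracting forced adjacencies from Claims \ref{xi}, \ref{k-2} and \ref{even} and rerouting $C$ through $x$ to get a cycle of length $|V(C)|+1$. The only difference is cosmetic: the paper picks a neighbor $x_p^+$ of $x_{i+1}$ with $i+2\le p\le i+k-1$ and uses the chords $x_{i+1}x_p^+$ and $x_p^{+|S_p|}x_{i+1}^+$, whereas you fix the analogous chords $x_{i-1}x_i^+$ and $x_{i+1}^-x_{i-1}^+$ (both valid for $k\ge 3$, as your index checks modulo $2k-2$ confirm), yielding the same contradiction.
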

\begin{proof}
Suppose to the contrary that $|S_i|$ is even for some $i=1,\dots,t$.
Then by Claims \ref{even} and \ref{evenn}, $t=2k-2$ and $|S_j|$ is even for $j$ with $i+k-1\le j\le i+2k-2$.
As $|S_{i+k-1}|$ is even, we have by Claim \ref{evenn} that $|S_j|$ is even for  $i+k-1+k-1\le j\le i+k-1+2k-2$ and so $|S_j|$ is even for each $j=1,\dots,2k-2$.

By Claim \ref{xi}, $\left| N_C(x)^+\setminus N_C(x_{i+1})\right|\le k-1$, i.e., $\left| N_C(x)^+\cap N_C(x_{i+1})\right|\ge t-(k-1)= k-1\ge 2$.
Let $x_p^+\in N_C(x)^+\cap N_C(x_{i+1})$ with $p\neq i+1$.
By Claim \ref{even}, $i+2\le p\le i+k-1$. Then by Claims \ref{k-2}  and \ref{even} again, we have $S'\setminus \{x_{p+1}^+,\dots,x_{p+k-2}^+\}\subseteq N_C(x_p^{+|S_p|})$.
As $i+2\le p\le i+k-1$, we have $p+k-2\le i+2k-3$,  $p+1\ge i+3$,  and hence $\{p+1,\dots,p+k-2 \}\subseteq \{i+3,\dots,i+2k-3\}$.
So $x_p^{+|S_p|}x_{i+1}^+\in E(G)$, implying that
\[
x_{i+1}x_p^+\overrightarrow{C}x_p^{+|S_p|}x_{i+1}^+\overrightarrow{C}
x_pxx_{p+1}\overrightarrow{C}x_{i+1}
\]
is a cycle of $G$ longer than $C$, a contradiction.
So $|S_i|$ is odd.
\end{proof}

Recall that $S'=\cup_{i=1}^tS_i'$.
By Claim \ref{odd}, $|S_i|$ is odd for each $i=1,\dots,t$, so
\[
|V(C)|=2|S'|.
\]

By the argument in Claim \ref{Claim4}, we have

\begin{claim} \label{c6}
For any $y\in V(G)\setminus  V(C)$, $N_C(y)\cap S'=\emptyset$.
\end{claim}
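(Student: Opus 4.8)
The final statement to prove is Claim~\ref{c6}: for any $y\in V(G)\setminus V(C)$, we have $N_C(y)\cap S'=\emptyset$. The excerpt tells me to mimic the argument of Claim~\ref{Claim4} from the proof of Theorem~\ref{xu}, so my plan is to adapt that two-stage argument to the present setting, where the connectivity is only $2k-2$ rather than $2k$, so $|N_C(x)^+|\ge 2k-2$.

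First I would dispose of the trivial case: if $y=x$, then $N_C(x)\cap S'=\emptyset$ holds by construction, since $S'$ consists of the odd-indexed vertices $x_i^{+j}$ in each segment $S_i$ and the $x_i$ themselves are the segment endpoints, not interior points; more directly, $x$ is adjacent precisely to the $x_i=x_{i+1}^-$, none of which lie in $S'$. So I assume $y\ne x$. The heart of the argument is the intermediate claim that $N_C(y)\cap N_C(x)^+=\emptyset$. To establish this I would argue by contradiction in two sub-cases exactly as in Claim~\ref{Claim4}. If $\left|N_C(y)\cap N_C(x)^+\right|\ge 2$, pick $x_p^+,x_q^+\in N_C(y)$ with $p<q$ and build the rotation
\[
x_p\overleftarrow{C}x_q^+\,y\,x_p^+\overrightarrow{C}x_q\,x\,x_p,
\]
a cycle longer than $C$, contradicting maximality. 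If $\left|N_C(y)\cap N_C(x)^+\right|=1$, say $y'$ is the unique common neighbor, then since $N_C(x)^+$ is independent (Claim~\ref{c1}) and $|N_C(x)^+|\ge 2k-2$, the graph $G\left[N_C(x)^+\cup\{y\}\right]$ has the single edge $yy'$; discarding $y'$ leaves at least $2k-3\ge k$ isolated vertices (using $k\ge 3$, so $2k-3\ge k$), which together with the edge $yy'$ give an induced $P_2\cup kP_1$, a contradiction. Hence $N_C(y)\cap N_C(x)^+=\emptyset$.

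With that in hand I would finish as in Claim~\ref{Claim4}. Suppose for contradiction there is $z\in N_C(y)\cap S'$. Since $N_C(x)^+\subseteq S'$ and $N_C(y)$ meets $N_C(x)^+$ in nothing, $z\notin N_C(x)^+$; and by Claim~\ref{c3} any odd-position vertex of a segment (which is what $z$ is, as $z\in S'$) satisfies $N_C(x)^+\cap N_C(z)=\emptyset$, so $z$ has no neighbor in $N_C(x)^+$. Therefore $G\left[N_C(x)^+\cup\{y,z\}\right]$ contains exactly the one edge $yz$, and since $|N_C(x)^+|\ge 2k-2\ge k$ we extract an induced $P_2\cup kP_1$, the final contradiction. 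Thus $N_C(y)\cap S'=\emptyset$.

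The step I expect to carry the real weight is the verification that $N_C(y)\cap N_C(x)^+=\emptyset$, and within it the counting in the $|N_C(y)\cap N_C(x)^+|=1$ case: here the weaker $(2k-2)$-connectivity leaves only $2k-3$ surplus vertices after removing the one common neighbor, so the hypothesis $k\ge 3$ (giving $2k-3\ge k$) is genuinely needed and should be invoked explicitly, unlike in Theorem~\ref{xu} where $2k$-connectivity gave slack to spare. Everything else is a direct transcription of the cycle-rotation and forbidden-subgraph bookkeeping already used repeatedly, relying on Claims~\ref{c1} and~\ref{c3}.
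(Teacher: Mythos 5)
Your proof is correct and follows essentially the same route as the paper, which handles this claim simply by invoking the argument of Claim~\ref{Claim4} (split into the cases $\left|N_C(y)\cap N_C(x)^+\right|\ge 2$ via a cycle rotation and $=1$ via an induced $P_2\cup kP_1$, then the final forbidden-subgraph step using Claim~\ref{c3}); your explicit remark that the weaker bound $|N_C(x)^+|\ge 2k-2$ makes the count $2k-3\ge k$, and hence the hypothesis $k\ge 3$, genuinely load-bearing is a worthwhile detail the paper leaves implicit. One trivial slip: in the $y=x$ case you write $x_i=x_{i+1}^-$, which is false since $|S_i|\ge 1$ forces $x_{i+1}^-=x_i^{+|S_i|}\ne x_i$, but your actual justification stands because $N_C(x)$ is disjoint from every segment $S_i$ and hence from $S'$.
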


By Claims \ref{c2} and \ref{in}, $V(G)\setminus V(C)$ and $S'$ are independent sets.
So $(V(G)\setminus V(C))\cup S'$ is an independent set by Claim \ref{c6}.

Let $S=V(C)\setminus S'$.
Then $|S|=\frac{1}{2}|V(C)|$.
Therefore, $c(G-S)=|V(G)|-|V(C)|+|S'|>\frac{1}{2}|V(C)|$ and so
\[
\tau(G)\le \frac{|S|}{c(G-S)}<1,
\]
contradicting to $\tau(G)\ge 1$.
This completes the proof of Theorem \ref{li}.
\end{proof}

\vspace{5mm}

\noindent {\bf Acknowledgement.}
This work was supported by the National Natural Science Foundation of China (No.~12071158).

%

\end{document}